\newcommand{\blind}{0}
\def\namedlabel#1#2{\begingroup
	#2%
	\def\@currentlabel{#2}%
	\phantomsection\label{#1}\endgroup
}
\newcommand{\df}{\mathrm{d}}
\newcommand{\X}{\mathsf{X}}
\newcommand{\Y}{\mathsf{Y}}
\newcommand{\Z}{\mathsf{Z}}
\newcommand{\SF}{\mathcal{A}}
\newcommand{\A}{\mathcal{A}}
\newcommand{\F}{\mathcal{F}}
\newcommand{\ind}{\mathbf{1}}
\newcommand{\Mtk}{\mtkfont{T}}
\newcommand{\mtkfont}{\mathcal}
\newcommand{\pcite}[1]{\citeauthor{#1}'s \citeyearpar{#1}}
\newtheorem{theorem}{Theorem}
\newtheorem{lemma}[theorem]{Lemma}
\newtheorem{proposition}[theorem]{Proposition}
\newtheorem{remark}[theorem]{Remark}
\begin{document}

\def\spacingset#1{\renewcommand{\baselinestretch}%
{#1}\small\normalsize} \spacingset{1}


\if1\blind
{
  \title{\bf Geometric ergodicity of trans-dimensional Markov chain Monte Carlo algorithms}
  \author{}
  \maketitle
} \fi

\if0\blind
{
  \title{\bf Geometric ergodicity of trans-dimensional Markov chain Monte Carlo algorithms}
  \author{Qian Qin\thanks{
  		Partially supported by NSF DMS-2112887}\hspace{.2cm}\\
  	School of Statistics, University of Minnesota}
  \maketitle
} \fi

\bigskip
\begin{abstract}
This article studies the convergence properties of trans-dimensional MCMC algorithms when the total number of models is finite.
It is shown that, for reversible and some non-reversible trans-dimensional Markov chains, under mild conditions, geometric convergence is guaranteed if the Markov chains associated with the within-model moves are geometrically ergodic.
This result is proved in an $L^2$ framework using the technique of Markov chain decomposition.
While the technique was previously developed for reversible chains, this work extends it to the point that it can be applied to some commonly used non-reversible chains.
The theory herein is applied to reversible jump algorithms for three Bayesian models: a probit regression with variable selection, a Gaussian mixture model with unknown number of components, and an autoregression with Laplace errors and unknown model order.
\end{abstract}

\noindent%
{\it Keywords:}  convergence rate, Markov chain decomposition, reversible jump, spectral gap.
\vfill

\newpage
\spacingset{1.9} 

	\section{Introduction} \label{sec:intro}

In many statistical setups, the parameter space of interest is a union of disjoint subsets, where each subset corresponds to a model, and the dimensions of the subsets need not be the same.
Trans-dimensional Markov chain Monte Carlo (MCMC) is a class of algorithms for sampling from distributions defined on such spaces, which allows for model selection as well as parameter estimation.
This type of algorithm, especially the reversible jump MCMC developed by \cite{green1995reversible}, has been applied to important problems like change-point estimation \citep{green1995reversible}, autoregression models \citep{troughton1998reversible, ehlers2002efficient, vermaak2004reversible}, variable selection \citep{chevallier2022reversible}, wavelet models \citep{cornish2015bayeswave} etc.
The current article aims to provide conditions on geometric ergodicity for trans-dimensional Markov chains when the total number of models is finite.

Let $\mathsf{K}$ be a finite set whose elements are referred to as ``models."
A model~$k$ in~$\mathsf{K}$ is associated with a non-empty measurable space $(\Z_k, \A_k)$ and a nonzero finite measure $\Psi_k$ on $(\Z_k, \A_k)$.
Let $\X = \bigcup_{k \in \mathsf{K}} \{k\} \times \Z_k$, and let $\SF$ be the sigma algebra generated by sets of the form $\{k\} \times \mathsf{A}$, where $k \in \mathsf{K}$ and $\mathsf{A} \in \A_k$.
Consider the task of sampling from the probability measure~$\Pi$ on $(\X, \SF)$ such that
\begin{equation} \label{eq:Pi}
	\Pi(\{k\} \times \mathsf{A}) = \frac{\Psi_k(\mathsf{A})}{ \sum_{k' \in \mathsf{K}} \Psi_{k'}(\Z_{k'}) }, \quad k \in \mathsf{K}, \; \mathsf{A} \in \A_k.
\end{equation}
Suppose that a procedure generates a random element $(K,Z) \sim \Pi$.
Then, for $k \in \mathsf{K}$, $\Psi_k(\Z_k)/ \sum_{k' \in \mathsf{K}} \Psi_{k'}(\Z_{k'})$ gives the probability of $K = k$, and $\Phi_k(\cdot) = \Psi_k(\cdot)/\Psi_k(\Z_k)$ gives the conditional distribution of~$Z$ given $K = k$.

In practice,~$\Pi$ is often intractable, prompting the use of trans-dimensional MCMC methods.
A central goal of the current work is to provide verifiable sufficient conditions for trans-dimensional MCMC algorithms to be geometrically convergent in the $L^2$ distance.
When~$\SF$ is countably generated and the Markov chain is $\varphi$-irreducible, $L^2$ geometric convergence implies the classical notion of $\Pi$-a.e. geometric ergdoicity.
See \cite{roberts1997geometric}, \cite{roberts2001geometric}, and \cite{gallegos2023equivalences}.
Geometric ergodicity is one of the key conditions ensuring the reliability of MCMC estimation.
It guarantees a central limit theorem (CLT) for ergodic sums \citep{jones2001honest}; moreover, consistent uncertainty assessment through asymptotically valid confidence intervals is possible under geometric ergodicity \citep{vats2019multivariate}.


The convergence behavior of trans-dimensional MCMC algorithms is in general far from well understood.
\cite{roberts2006harris} established some general conditions for trans-dimensional Markov chains to be Harris recurrent.
Geometric ergodicity of some specific trans-dimensional algorithms was established in \cite{geyer1994simulation}, \cite{andrieu1999joint}, \cite{ortner2006reversible}, and \cite{schreck2015shrinkage}.
Existing proofs of geometric ergodicity often rely on drift and minorization conditions, or in some simple situations, Doeblin's condition.
The current work instead utilizes the decomposition of Markov chains, a remarkable technique developed by \cite{caracciolo1992two} and documented in \cite{madras2002markov}.
This technique allows one to decompose the dynamic of a trans-dimensional Markov chain into within- and between-model movements, which can be analyzed separately.
Using an extended version of this technique and exploiting the assumption that $|\mathsf{K}| < \infty$, Theorem~\ref{thm:main} is established.
This result describes a divide-and-conquer paradigm that enables one to establish geometric convergence of the trans-dimensional chain by combining the geometric ergodicity of its within-model components.
Quantitative bounds on the convergence rate will also be provided; see Theorem~\ref{thm:quantitative}.

Previously, Markov chain decomposition has found its use in important problems like simulated and parallel tempering.
See, e.g., \cite{madras2002markov, woodard2009conditions, ge2018simulated}.
This technique can be used to analyze a Markov chain whose state space can be partitioned into subsets such that, within each subset, the Markov chain's behavior is easy to analyze.
It was originally developed for reversible Markov chains.
The current work provides an extended version of the technique, given in Lemma~\ref{lem:decomposition}, that can deal with some important non-reversible chains.
(Despite their name, reversible jump algorithms can often be non-reversible.)

The theory developed herein is applied to reversible jump MCMC algorithms for three practical Bayesian models:
a probit regression model with variable selection, a Gaussian mixture model with unknown number of components, and an autoregressive model with Laplace errors and unknown model order.
Among these algorithms, only one is assuredly reversible. 
For each algorithm, we demonstrate geometric convergence. 
Enabled by geometric ergodicity, we also conduct Monte Carlo uncertainty assessments.


Finally, it must be emphasized that verifying geometric ergodicity is but one of the first steps towards fully understanding the convergence behavior of an MCMC algorithm.
A chain being geometrically convergent does not ensure that it has a fast convergence rate.
While this work does provide a quantitative convergence rate bound, calculating the quantities involved in the bound can be practically challenging.
Obtaining sharp estimates for the convergence rate remains an open problem for most practical trans-dimensional MCMC algorithms.

The rest of this article is organized as follows.
Following a quick overview of the main qualitative result of this article, Section~\ref{sec:l2} contains some preliminary facts on the $L^2$ theory of Markov chains.
The main technical results involving Markov chain decomposition and the convergence rate of trans-dimensional MCMC are given in Section~\ref{sec:main}.
One toy and two practical examples are studied in Section~\ref{sec:examples}, followed by a brief discussion in Section \ref{sec:discussion}.
\ref{s1} contains some minor results and technical proofs.
\ref{s2} contains yet another practical example.

\subsection{Conditions for geometric ergodicity: An overview}

Consider a trans-dimensional Markov chain $(X(t))_{t=0}^{\infty} = (K(t), Z(t))_{t=0}^{\infty}$ whose state space is~$\X$.
Let $P: \X \times \SF \to [0,1]$ be its Markov transition kernel (Mtk), i.e., for $(k,z) \in \X$ and $\mathsf{A} \in \SF$, $P((k,z), \mathsf{A})$ is understood as the conditional probability of $(K(t+1), Z(t+1)) \in \mathsf{A}$ given $(K(t), Z(t)) = (k,z)$.
Suppose that~$\Pi$ is a stationary distribution of this chain, i.e., $\Pi = \Pi P$, or more explicitly, for $\bigcup_{k \in \mathsf{K}} \{k\} \times \mathsf{A}_k \in \SF$,
\[
\sum_{k' \in \mathsf{K}} \Psi_{k'}(\mathsf{A}_{k'}) = \sum_{k \in \mathsf{K}} \sum_{k' \in \mathsf{K}} \int_{\Z_k} \Psi_k(\df z) P((k,z),  \{k'\} \times \mathsf{A}_{k'}).
\]


The main results of this paper are stated in terms the $L^2$ theory for Markov chains, which is reviewed in Section~\ref{sec:l2}.
Essentially, if $\Mtk(\cdot,\cdot)$ is an Mtk that has a stationary distribution~$\omega$, then~$\Mtk$ can be regarded as a bounded linear operator on a certain Hilbert space $L_0^2(\omega)$.
A sufficient condition for the corresponding chain to be $L^2$ geometrically convergent is that the operator norm of some power of~$\Mtk$ is less than one.

One of the main results of this paper is stated below.
See Section~\ref{sec:main} for more details.

\begin{theorem} \label{thm:main}
	Assume that each of the following conditions holds for the trans-dimensional chain:
	\begin{enumerate} 
		\item [\namedlabel{H1}{(H1)}] There are a positive integer~$t_0$ and a sequence of Mtks $P_k: \Z_k \times \A_k \to [0,1], \; k \in \mathsf{K},$ such that the following properties hold for each~$k$:
		\begin{enumerate}
			\item [(i)] $\Phi_k P_k = \Phi_k$, where $\Phi_k(\cdot) = \Psi_k(\cdot)/\Psi_k(\Z_k)$ is the normalization of $\Psi_k(\cdot)$.
			\item [(ii)] When $P_k$ is regarded as an operator on $L_0^2(\Phi_k)$, the norm of its $t_0$'th power $P_k^{t_0}$ is strictly less than~1.
			\item [(iii)] there exists a constant $c_k > 0$ such that 
			$
			P((k,z), \{k\} \times \mathsf{A}) \geq c_k P_k(z, \mathsf{A})
			$ for $z \in \Z_k$ and $\mathsf{A} \in \A_k$.
		\end{enumerate} 
		
		\item [\namedlabel{H2}{(H2)}] 
		{ The between-model movements are irreducible.
		To be precise, the Mtk $\bar{P}: \mathsf{K} \times 2^{\mathsf{K}} \to [0,1]$ given by $
			\bar{P}(k,\{k'\}) = \int_{\Z_k} \Phi_k(\df z) P((k,z) , \{k'\} \times \Z_{k'}), \; k, k' \in \mathsf{K},
			$
		is irreducible.}
	\end{enumerate}
	Then the norm of $P^{t_0}$ is strictly less than one, and the trans-dimensional chain is $L^2(\Pi)$ geometrically convergent.
\end{theorem}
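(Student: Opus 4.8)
The plan is to work entirely in the operator framework of Section~\ref{sec:l2} and reduce the claim to the decomposition machinery of Lemma~\ref{lem:decomposition}. By Lemma~\ref{lem:equivalence}, once I exhibit a positive integer~$t$ with $\|P^t\| < 1$ as an operator on $L_0^2(\Pi)$, the asserted $L^2(\Pi)$ geometric convergence follows immediately, so the entire burden is the norm bound. The organizing device is the orthogonal splitting of $L^2(\Pi)$ induced by the partition $\X = \bigcup_{k} \{k\} \times \Z_k$: writing $w_k = \Psi_k(\Z_k)/\sum_{k'} \Psi_{k'}(\Z_{k'})$, every $f \in L^2(\Pi)$ decomposes as $f = \bar f + \tilde f$, where $\bar f(k,z) = \int_{\Z_k} f(k,z')\,\Phi_k(\df z')$ depends only on $k$ (the \emph{between-model} component) and $\tilde f$ has zero $\Phi_k$-mean in every model (the \emph{within-model} component). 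Thus $L_0^2(\Pi) = H_{\mathrm{bt}} \oplus H_{\mathrm{wt}}$, where $H_{\mathrm{bt}}$ is the $(|\mathcal{K}|-1)$-dimensional space of mean-zero functions of $k$ alone and $H_{\mathrm{wt}}$ collects the within-model fluctuations. This is the projection onto which the decomposition technique is built.

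Next I would control the two components separately. For the within-model part I use \ref{H1}. Writing $p_k(z) = P((k,z),\{k\}\times\Z_k)$ for the holding probability, condition (iii) applied with $A = \Z_k$ forces $p_k(z) \ge c_k > 0$, and more importantly shows that the within-model sub-kernel of $P$ dominates $c_k P_k$. Combining the stationarity (i) with the strictly-less-than-one norm of $P_k^t$ granted by (ii), the domination lets me write the ``stay-in-model'' action of $P$ as a mixture in which the $c_k P_k$ piece strictly contracts $H_{\mathrm{wt}}$ while the remaining sub-Markov mass is merely nonexpansive; iterating over $t$ steps yields a strict contraction on $H_{\mathrm{wt}}$ modulo leakage into $H_{\mathrm{bt}}$. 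For the between-model part I use \ref{H2}: the induced dynamics on the finite index set $\mathcal{K}$ has transition-support matrix $\Gamma_P$, and the entrywise positivity of $\Gamma_P^s$ is exactly primitivity, so the corresponding finite-state operator on $H_{\mathrm{bt}}$ has a spectral gap and some power has norm below one.

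With these two ingredients I would invoke Lemma~\ref{lem:decomposition}, whose role is to assemble a within-model contraction and a between-model mixing estimate into a single bound $\|P^t\| < 1$; verifying its hypotheses from \ref{H1} and \ref{H2} --- in particular matching the restriction operators to the $P_k$ and the projection chain to $\Gamma_P$ --- is the main content of the argument. A genuine subtlety I would have to address is that $\Gamma_P$ records only \emph{whether} inter-model flow is positive, a purely qualitative $0/1$ datum, whereas the norm bound demands a \emph{quantitative} contraction; here I would lean on $|\mathcal{K}| < \infty$ together with the uniform lower bounds supplied through (iii) to upgrade reachability in $\Gamma_P^s$ into an honest positive transition mass along each length-$s$ path.

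I expect the principal obstacle to be the non-reversibility of $P$. The classical decomposition results rest on Dirichlet-form and variational (Rayleigh-quotient) comparisons that are unavailable when $P$ is not self-adjoint, and indeed the orthogonal splitting $H_{\mathrm{bt}} \oplus H_{\mathrm{wt}}$ is not preserved by $P$, so the cross-terms coupling the two subspaces could in principle spoil the contraction. The crux is therefore to track the composition of within- and between-model movements at the level of operator norms of \emph{powers} of $P$ --- rather than of $P$ itself --- over a time horizon long enough (a suitable multiple of the $s$ and $t$ in \ref{H1}--\ref{H2}) that both coordinates have mixed; showing that the off-diagonal coupling is absorbed is precisely what the extended, non-reversible form of Lemma~\ref{lem:decomposition} is designed to deliver.
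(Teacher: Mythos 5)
Your proposal follows essentially the same route as the paper: the paper proves Theorem~\ref{thm:main} by applying Lemma~\ref{lem:decomposition} with $T = P^t$, $S = P$, and $T_k = P_k^t$ (this is Theorem~\ref{thm:quantitative}), obtaining the within-model contraction from \ref{H1} via exactly your mixture/domination argument, handling non-reversibility because the lemma bounds $\|TS^*\|_{\Pi}$ and the projected kernel $\bar{P}$ is automatically reversible and positive semi-definite, and closing the between-model side just as you outline --- using (iii) and $|\mathcal{K}| < \infty$ to upgrade positivity of $\Gamma_P^s$ to positivity of $M_P^s$, hence primitivity and $\lambda_1(M_P) < 1$. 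One small correction: the step from $\|P^t\|_{\Pi} < 1$ to $L^2(\Pi)$ geometric convergence is the unnumbered lemma following Lemma~\ref{lem:equivalence} (from \cite{roberts1997geometric}), not Lemma~\ref{lem:equivalence} itself, and the paper needs no time horizon involving a multiple of~$s$ --- the exponent~$s$ enters only through the finite matrix power $M_P^s$.
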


Note that $\bar{P}(k,\{k'\})$ can be understood as the average probability flow from model~$k$ to model~$k'$, and thus $\bar{P}$ characterizes the between movements.
{ Evidently, \ref{H2} holds as long as the chain is $\Pi$-irreducible.
We give a rigorous proof of this assertion in Section \ref{app:h2} of \ref{s1}.}
Hence, in practice, \ref{H2} usually trivially holds.

Trans-dimensional MCMC algorithms typically involve a within-model move type, where the underlying chain stays in a model, say~$k$, with probability~$c_k$, and move according to an Mtk $P_k$ such that $\Phi_k P_k = \Phi_k$.
Then such $c_k$ and $P_k$ satisfy (i) and (iii) in \ref{H1}.
Condition (ii) in \ref{H1} requires a careful analysis of the within-model moves of an algorithm.
According to Lemma~\ref{lem:Pk-convergence} below, in several important situations, this condition is implied by the geometric ergodicity of the chain associated with $P_k$, or some closely related Markov chain whose state space is $\Z_k$.
Since the space $\Z_k$ is typically of a fixed dimension, the hope is that chains that move in $\Z_k$ can be analyzed using well-established tools such as drift and minorization or functional inequalities.

\begin{lemma} \label{lem:Pk-convergence}
	Let~$k$ be in $\mathsf{K}$.
	Suppose that $\SF_k$ is countably generated, $\Phi_k P_k = \Phi_k$, and the chain associated with $P_k$ is $\varphi$-irreducible.
	Then, in each of the following situations, (ii) in \ref{H1} holds with $t_0 = 1$.
	\begin{enumerate}
		\item [(i)] $P_k$ defines a $\Phi_k$-a.e. geometrically ergodic chain that is reversible with respect to $\Phi_k$.
		\item [(ii)] $P_k$ defines a deterministic-scan Gibbs chain with two components that is $\Phi_k$-a.e. geometrically ergodic.
		\item [(iii)] $P_k$ defines a deterministic-scan Gibbs chain, and there exists a $\Phi_k$-a.e. geometrically ergodic random-scan Gibbs chain based on the same set of conditional distributions.
	\end{enumerate}
\end{lemma}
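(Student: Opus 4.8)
The plan is to identify condition \ref{H1}(ii) with $t=1$ as the single assertion that the operator norm $\|P_k\|$ on $L_0^2(\Phi_k)$ is strictly less than one, and to establish this in each case by reducing to a reversible (self-adjoint) operator. The basic tool is the reversible case of Lemma~\ref{lem:equivalence}: for a reversible, $\varphi$-irreducible chain on a countably generated space, $\Phi_k$-a.e.\ geometric ergodicity is equivalent to $L^2(\Phi_k)$ geometric convergence, i.e.\ to the spectral radius on $L_0^2(\Phi_k)$ being $<1$. Case (i) is then immediate: when $P_k$ is reversible it is self-adjoint on $L_0^2(\Phi_k)$, so $\|P_k^n\| = \|P_k\|^n$ and the operator norm coincides with the spectral radius; geometric ergodicity forces the spectral radius below one, which is exactly $\|P_k\| < 1$. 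The genuine work is in (ii) and (iii), where the subtlety is that the deterministic-scan operators are \emph{not} reversible and the above equivalence cannot be applied to them directly.

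For (ii) I would write the two conditional Gibbs updates as orthogonal projections $Q_1, Q_2$ on $L_0^2(\Phi_k)$ (each conditional-expectation operator is self-adjoint and idempotent), so that, up to the harmless choice of scan order, $P_k = Q_2 Q_1$. The identity $\|P_k\|^2 = \|(Q_2 Q_1)^*(Q_2 Q_1)\| = \|Q_1 Q_2 Q_1\|$ reduces the problem to the self-adjoint Markov operator $Q_1 Q_2 Q_1$ (equivalently, to the reversible marginal chain on the second coordinate). The crucial step is to transfer $\Phi_k$-a.e.\ geometric ergodicity from the deterministic scan $Q_2 Q_1$ to this reversible surrogate; here I would invoke the de-initialization principle that a two-component systematic-scan Gibbs chain and its marginal chain share the same geometric-convergence status, and note that $\varphi$-irreducibility and countable generation are inherited. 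With the surrogate known to be geometrically ergodic, case (i) applied to it yields $\|Q_1 Q_2 Q_1\| < 1$, whence $\|P_k\| < 1$.

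For (iii), the random-scan operator $R = \frac{1}{m}\sum_{i=1}^m Q_i$ is self-adjoint, being an average of projections, so case (i) applied to $R$ converts the assumed geometric ergodicity of the random scan into $\|R\| < 1$. It then remains to dominate the deterministic-scan norm $\|P_k\| = \|Q_m \cdots Q_1\|$ by a function of $\|R\|$. I would supply this with a norm inequality for products versus averages of orthogonal projections from the theory of alternating projections (of Kayalar--Weinert type), which guarantees $\|Q_m \cdots Q_1\| < 1$ as soon as the average satisfies $\|R\| < 1$; this closes (iii) with $t = 1$.

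The main obstacle is precisely the reduction to a reversible object in (ii), together with the analogous routing in (iii): classical $\Phi_k$-a.e.\ geometric ergodicity and $L^2$ geometric convergence are genuinely inequivalent for non-reversible chains, so Lemma~\ref{lem:equivalence} must never be applied to the deterministic scan itself. Everything hinges on replacing the non-reversible scan by a reversible surrogate---the marginal chain, the symmetrized product $Q_1 Q_2 Q_1$, or the random-scan average---without altering the geometric-convergence status, and on the operator-theoretic identities and inequalities ($\|Q_2 Q_1\|^2 = \|Q_1 Q_2 Q_1\|$ and the product-versus-average bound) that let the reversible conclusion be read back as $\|P_k\| < 1$.
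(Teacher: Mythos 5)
Your argument is correct, and the comparison here is somewhat asymmetric: the paper's own proof of Lemma~\ref{lem:Pk-convergence} contains no argument at all, being a list of citations --- (i) to \cite{roberts1997geometric} and \cite{roberts2001geometric}, (ii) to Lemma~3.2 and Proposition~3.5 of \cite{qin2020convergence}, and (iii) to Theorem~3.1 of \cite{chlebicka2023solidarity} followed by an appeal to case (i). Your blind reconstruction follows the same conceptual route as those references (pass to a reversible surrogate, apply the reversible equivalence of Lemma~\ref{lem:equivalence}, transfer back), but supplies self-contained operator-theoretic proofs where the paper delegates. In (ii), the representation of the two conditional updates as orthogonal projections, the identity $\|Q_2Q_1\|_{\Phi_k}^2=\|Q_1Q_2Q_1\|_{\Phi_k}$, and the de-initialization transfer to the reversible marginal chain are precisely the mechanism behind the cited results of \cite{qin2020convergence} (and de-initialization is the same device the paper itself uses in Appendix~\ref{app:robustgeo}). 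In (iii), your product-versus-average step is exactly the content of the cited solidarity theorem, and the implication you need does hold, with an elementary proof: if $\|Q_m\cdots Q_1 f_n\|_{\Phi_k}\to 1$ for unit vectors $f_n$, then since each $Q_i$ is a contraction one gets $\|Q_i f_n-f_n\|_{\Phi_k}\to 0$ for every $i$ by induction along the scan, hence $\langle m^{-1}\sum_{i} Q_i f_n, f_n\rangle_{\Phi_k} = m^{-1}\sum_i \|Q_i f_n\|_{\Phi_k}^2 \to 1$ and the average has norm one; contrapositively, $\|R\|_{\Phi_k}<1$ forces $\|Q_m\cdots Q_1\|_{\Phi_k}<1$ (your attribution to Kayalar--Weinert is a slight misdirection, since their results concern exact limits of alternating projections, but the inequality itself is sound). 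What your version buys is independence from the cited literature; what it leaves implicit are two routine hypotheses needed to invoke case (i) for the surrogate chains: $\varphi$-irreducibility of the marginal and random-scan chains (inherited, e.g., because the $m$-step random-scan kernel dominates $m^{-m}$ times the deterministic-scan kernel, and the marginal chain is a measurable projection of the joint chain started from the relevant conditional), and countable generation of the factor sigma-algebras. Both follow from the lemma's standing assumptions by standard arguments, so these are loose ends to be stated rather than gaps in the approach.
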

\begin{proof}
	For (i), see Theorem 2.1 of \cite{roberts1997geometric} and Theorem 2 of \cite{roberts2001geometric}.
	For (ii), see Lemma 3.2 and Proposition 3.5 of \cite{qin2020convergence}.
	For (iii), see Theorem 3.1 of \cite{chlebicka2023solidarity} and invoke (i).
\end{proof}

In Section \ref{ssec:mixture}, we give an example of establishing and utilizing (ii) in \ref{H1} with $t_0 > 1$.

\section{Preliminaries}  \label{sec:l2}

Let $(\Y, \F, \omega)$ be a generic probability space.
Let $L^2(\omega)$ be the Hilbert space of real functions $f: \Y \to \mathbb{R}$ that are square integrable with respect to~$\omega$, with the inner product between two functions defined as
$
\langle f, g \rangle_{\omega} = \int_{\Y} f(y) g(y) \, \omega(\df y),
$
and the norm defined as $\|f\|_{\omega} = \sqrt{\langle f, f \rangle_{\omega}}$.
Denote by $L_0^2(\omega)$ the subspace of $L^2(\omega)$ that consists of functions~$f$ such that $\omega f := \langle f, \ind_{\Y} \rangle_{\omega} = 0$, where $\ind_{\Y}(y) = 1$ for $y \in \Y$.
A probability measure $\mu$ on $(\Y, \F)$ is said to be in $L_*^2(\omega)$ if $\df \mu/ \df \omega$ exists and is in $L^2(\omega)$. 
For two probability measures $\mu$ and $\nu$ in $L_*^2(\omega)$, define their $L^2$ distance by $\|\mu - \nu\|_{\omega} = \|\df\mu/\df \omega - \df \nu/\df \omega\|_{\omega}$.

Let $\Mtk: \Y \times \F \to [0,1]$ be an Mtk whose stationary distribution is~$\omega$.
For a probability measure~$\mu$ on~$\F$, define $\mu \Mtk^t(\cdot) = \int_{\Y} \mu(\df y) \Mtk^t(y, \cdot)$, where $\Mtk^t$ is the corresponding $t$-step Mtk.
We say $\Mtk$ is $L^2(\omega)$ geometrically convergent if there exist $\rho < 1$ and a function $C: L_*^2(\omega) \to [0,\infty)$ such that for $\mu \in L_*^2(\omega)$ and $t \geq 1$,
\begin{equation} \label{ine:geometric}
	\|\mu \Mtk^t - \omega\|_{\omega} \leq C(\mu) \rho^t.
\end{equation}
Let $\|\cdot\|_{\scriptsize\mbox{TV}}$ be the total variance distance between two probability measures.
$\Mtk$ is said to be $\omega$-a.e. geometrically ergodic if there exist $\rho < 1$ and $C: \Y \to [0, \infty)$ such that, for $\omega$-almost every $y \in \Y$ and $t \geq 1$,
$
	\|\Mtk^t(y,\cdot) - \omega(\cdot) \|_{\scriptsize\mbox{TV}} \leq C(y) \rho^t.
$
Results from \cite{roberts2001geometric} indicate that when $\F$ is countably generated, if the chain is $L^2(\omega)$ geometrically convergent, then it is $\omega$-a.e. geometrically ergodic; the converse holds if $\Mtk$ is reversible with respect to~$\omega$.
See also \cite{roberts1997geometric} and \cite{gallegos2023equivalences}.

The Mtk~$\Mtk$ can be understood as a linear operator on $L_0^2(\omega)$: for $f \in L_0^2(\omega)$, 
$
\Mtk f(\cdot) = \int_{\Y} \Mtk(\cdot, \df y) f(y).
$
One can use the Cauchy-Schwarz inequality to show that the $L^2$ norm of $\Mtk$, defined as
\[
\|\Mtk\|_{\omega} = \sup_{f \in L_0^2(\omega) \setminus \{0\}} \frac{\|\Mtk f\|_{\omega}}{\|f\|_{\omega}},
\]
is no greater than~1.
The operator norms of $\Mtk$ and its powers quantify the Markov chain's convergence rate, with smaller norms indicating faster convergence.
Indeed, if $s$ is a positive integer, then~\eqref{ine:geometric} holds for all $\mu \in L_*^2(\omega)$ and $t \geq 1$ with $\rho = \|\Mtk ^s\|_{\omega}^{1/s}$ and some $C(\cdot)$.
See Theorem 2.1 of \cite{roberts1997geometric} for more details on the interpretation of $\|\Mtk\|_{\omega}$.

The bounded operator~$\Mtk $ has a unique adjoint $\Mtk ^*$.
It is well-known that $\|\Mtk \|_{\omega}^2 = \|\Mtk ^*\|_{\omega}^2 = \|\Mtk  \Mtk ^*\|_{\omega} = \|\Mtk ^*\Mtk \|_{\omega}$.
The Mtk $\Mtk$ is reversible with respect to $\omega$ if and only if the operator $\Mtk$ is self-adjoint, i.e., $\Mtk  = \Mtk ^*$.
The operator~$\Mtk $ is positive semi-definite if it is self-adjoint, and $\langle \Mtk f, f \rangle_{\omega} \geq 0$ for $f \in L_0^2(\omega)$.

When $\Mtk$ is self-adjoint, its spectral gap is defined to be
\[
\mbox{Gap}_{\omega} (\Mtk) = 1 - \sup_{f \in L_0^2(\omega) \setminus \{0\}} \frac{\langle f, \Mtk f \rangle_{\omega}}{\|f\|_{\omega}^2}.
\]
Note that $	\mbox{Gap}_{\omega} (\Mtk) \geq 1 - \|\Mtk\|_{\omega} \geq 0$.
If $\Mtk$ is positive semi-definite, $\|\Mtk \|_{\omega} = 1 - \mbox{Gap}_{\omega}(\Mtk)$.

\section{Convergence Analysis} \label{sec:main}

\subsection{Markov chain decomposition}

This subsection describes the main probabilistic tool for proving Theorem~\ref{thm:main}.

Again, let $(\Y, \F, \omega)$ be a probability space.
Suppose that $\Y$ can be partitioned into a collection of disjoint subsets, $(\Y_k)_{k \in \mathsf{K}}$.
For this subsection, allow~$\mathsf{K}$ to be countably infinite.
Assume that $\omega(\Y_k) > 0$ for each~$k$.
\cite{caracciolo1992two} proposed a framework for analyzing a Markov chain moving in~$\Y$ by decomposing its dynamic into local movements within a subset $\Y_k$ and global movements across the disjoint subsets.
The key technical result, published in \cite{madras2002markov}, is stated for reversible chains \citep[see also, e.g.,][]{guan2007small,woodard2009conditions}.
Here, it is extended to a possibly non-reversible setting.

For $k \in \mathsf{K}$, let $\F_k$ be the restriction of~$\F$ on $\Y_k$, and let $\omega_k(\mathsf{B}) = \omega(\mathsf{B})/\omega(\Y_k)$ for $\mathsf{B} \in \F_k$.
Let $\bar{\omega}(\{k\}) = \omega(\Y_k)$ for $k \in \mathsf{K}$.
{ For an Mtk $\mtkfont{S}: \Y \times \F \to [0,1]$ such that $\omega \mtkfont{S} = \omega$, let
$\bar{\mtkfont{S}}$ be an Mtk on the discrete space $\mathsf{K}$ such that, for $k, k' \in \mathsf{K}$, 
\[
\bar{\mtkfont{S}}(k, \{k'\}) = \frac{1}{\omega(\Y_k)} \langle \ind_{\Y_k}, S \ind_{\Y_{k'}} \rangle_{\omega} = \frac{1}{\omega(\Y_k)} \int_{\Y_k} \omega(\df y) \, \mtkfont{S}(y, \Y_{k'}).
\] 
Then $\bar{\omega} \bar{\mtkfont{S}} = \bar{\omega}$.
It can be checked that $\bar{\mtkfont{S}}$ defines a self-adjoint (resp. positive semi-definite) operator on~$L_0^2(\bar{\omega})$ whenever $\mtkfont{S}$ is self-adjoint (resp. positive semi-definite). }
In the same vein, define the Mtk $\overline{\mtkfont{S}^*\mtkfont{S}}$ on $\mathsf{K}$, which takes the form
\[
\overline{\mtkfont{S}^*\mtkfont{S}}(k,\{k'\}) = \frac{1}{\omega(\Y_k)} \langle \ind_{\Y_k}, S^* S \ind_{\Y_{k'}} \rangle_{\omega} = \frac{1}{\omega(\Y_k)} \int_{\Y} \omega(\df y) \, \mtkfont{S}(y, \Y_k) \mtkfont{S}(y, \Y_{k'}).
\]
As long as $\omega \mtkfont{S} = \omega$, $\overline{\mtkfont{S}^*\mtkfont{S}}$ defines a positive semi-definite operator on~$L_0^2(\bar{\omega})$.

Below is the key technical lemma of this subsection. 

\begin{lemma} \label{lem:decomposition}
	Let $\Mtk $ and $\mtkfont{S}$ be Mtks such that $\omega \Mtk  = \omega \mtkfont{S} = \omega$.
	Suppose that for $k \in \mathsf{K}$, there exists an Mtk $\Mtk_k : \Y_k \times \F_k \to [0,1]$ such that $\omega_k \Mtk _k = \omega_k$.
	Assume further that there exists $c \in [0,1]$ such that $\Mtk (y,\mathsf{B}) \geq c \Mtk _k(y,\mathsf{B})$ for $k \in \mathsf{K}$, $y \in \Y_k$ and $\mathsf{B} \in \F_k$.
	Then
	\begin{equation} \label{ine:decomposition}
		1 - \|\Mtk \mtkfont{S}^*\|_{\omega}^2 \geq c^2 \left(1 - \sup_{k \in \mathsf{K}} \|\Mtk _k\|_{\omega_k}^2 \right) \mbox{Gap}_{\bar{\omega}} (\overline{\mtkfont{S}^*\mtkfont{S}} ).
	\end{equation}
	In particular, if furthermore $\mtkfont{S} = \Mtk$, then
	\[
	1 - \|\Mtk\|_{\omega}^4 \geq c^2 \left(1 - \sup_{k \in \mathsf{K}} \|\Mtk _k\|_{\omega_k}^2 \right) \mbox{Gap}_{\bar{\omega}} ( \overline{\mtkfont{\Mtk}^*\mtkfont{\Mtk}} ).
	\]
\end{lemma}

{
\begin{remark} \label{rem:madras}
	Lemma \ref{lem:decomposition} extends Theorem A.1 of \cite{madras2002markov}, originally formulated by \cite{caracciolo1992two}.
	From \pcite{caracciolo1992two} result, it can be deduced that, if, in addition to the assumptions in Lemma \ref{lem:decomposition}, $\Mtk$ and $\mtkfont{S}$ are reversible with respect to~$\omega$, $\Mtk_k$ is reversible with respect to $\omega_k$ for $k \in \mathsf{K}$, and the operator $\mtkfont{S}$ is positive semi-definite, then
	\[
	\mbox{Gap}_{\omega}(\mtkfont{S}^{1/2} \Mtk \mtkfont{S}^{1/2}) \geq c \inf_{k \in \mathsf{K}} \mbox{Gap}_{\omega_k}(\Mtk_k) \mbox{Gap}_{\bar{\omega}} (\bar{\mtkfont{S}}).
	\]
	In particular, if furthermore $\mtkfont{S} = \Mtk$, then
	\[
	1 - \|\Mtk\|_{\omega}^2 = \mbox{Gap}_{\omega}(\Mtk^2) \geq c \inf_{k \in \mathsf{K}} \mbox{Gap}_{\omega_k}(\Mtk_k) \mbox{Gap}_{\bar{\omega}} ( \bar{\Mtk} ).
	\]
\end{remark}

\begin{remark}
	The proof of Lemma \ref{lem:decomposition} is given in Section~\ref{app:decomposition} of \ref{s1}.
	It adopts the general idea of the proof of Theorem A.1 of \cite{madras2002markov}, with alterations made to tackle non-reversibility.
	Notably, \cite{madras2002markov} studied how a reversible $\Mtk$ acts on functions of the form $\mtkfont{S}^{1/2} f$ for $f \in L_0^2(\omega)$ by decomposing the Dirichlet form $\|\mtkfont{S}^{1/2}f\|_{\omega}^2 - \langle \mtkfont{S}^{1/2}f, \Mtk \mtkfont{S}^{1/2}f \rangle_{\omega}$ into local and global components.
	Here, we study how a possibly non-reversible $\Mtk$ acts on functions of the form $\mtkfont{S}^* f$ by decomposing $\|\mtkfont{S}^* f \|_{\omega}^2 - \|\Mtk \mtkfont{S}^* f\|_{\omega}^2$.
\end{remark}

\begin{remark} \label{rem:lower}
	Using standard techniques, it is straightforward to derive the following bound in the opposite direction of those given in Lemma \ref{lem:decomposition} and \cite{madras2002markov}:
	$$
	1 - \|\Mtk\|_{\omega}^2 \leq \mbox{Gap}_{\bar{\omega}}(\overline{\Mtk^* \Mtk}) \leq 1 - \|\bar{\mtkfont{T}}\|_{\bar{\omega}}^2.
	$$
	We provide a brief derivation at the end of Section \ref{app:decomposition} in \ref{s1}.
\end{remark}
}

\subsection{Geometric convergence of the trans-dimensional chain} \label{ssec:geometric}

Lemma \ref{lem:decomposition} can be used to construct an upper bound on the norm of $P^t$ for some $t \geq 1$, where~$P$ is the Mtk of the trans-dimensional chain defined in the Introduction.

{
Recall the definitions of $\bar{\mtkfont{S}}$ and $\overline{\mtkfont{S}^*\mtkfont{S}}$, and consider letting $(\Y,\F,\omega) = (\X, \SF, \Pi)$ and $\mtkfont{S} = P$.
Then $\bar{P}$ is defined as in \ref{H2}, and
\[
\begin{aligned}
	\overline{P^*P}(k,\{k'\}) &= \frac{1}{\Psi_k(\Z_k)} \sum_{k'' \in \mathsf{K}} \int_{\Z_{k''}} \Psi_{k''}(\df z) P((k'',z), \{k\} \times \Z_k ) P((k'',z), \{k'\} \times \Z_{k'}).
\end{aligned}
\]
These two Mtks describe the between-model movements of the trans-dimensional chain.
$\bar{P}(k,\{k'\})$ can be understood as the average probability of moving from model~$k$ to model~$k'$.
$\overline{P^*P}(k,\{k'\})$ is similar, but with $P$ replaced by $P^*P$. 
Indeed, under mild conditions, $P^*$ and thus $P^*P$ can be seen as Mtks that leave $\Pi$ invariant \citep{paulin2015concentration,choi2020metropolis}, and one can show that
$
\overline{P^*P}(k,\{k'\}) = \int_{\Z_k} \Phi_k(\df z) P^*P((k,z), \{k'\} \times \Z_{k'}) .
$
$\overline{P^*P}$ is called the ``multiplicative reversibilization" of $P$. 
Multiplicative reversibilizations are commonly investigated for non-reversible chains since at least \cite{fill1991eigenvalue}.
If $P$ defines a self adjoint (resp. positive semi-positive) operator on $L_0^2(\Pi)$, then $\bar{P}$ defines a self adjoint (resp. positive semi-positive) operator on $L_0^2(\bar{\Pi})$, where $\bar{\Pi}(\{k\}) = \Psi_k(\Z_k)/\sum_{k' \in \mathsf{K}} \Psi_{k'}(\Z_{k'})$ for $k \in \mathsf{K}$.
On the other hand, $\overline{P^*P}$ always defines a positive semi-definite operator on $L_0^2(\bar{\Pi})$.
}

We now provide a quantitative bound concerning the convergence rate of the trans-dimensional chain.


\begin{theorem} \label{thm:quantitative}
	Just for this theorem, allow $|\mathsf{K}|$ to be countably infinite.
	Suppose that, for each $k \in \mathsf{K}$, there exists an Mtk $P_k: \Z_k \times \A_k \to [0,1]$ such that $\Phi_k P_k = \Phi_k$.
	Suppose further that, for $k \in \mathsf{K}$, there exists $c_k > 0$ such that $P((k,z), \{k\} \times \mathsf{A}) \geq c_k P_k(z, \mathsf{A})$ for $z \in \Z_k$ and $\mathsf{A} \in \A_k$.
	Then, for any positive integer~$t$,
	\begin{equation} \label{ine:quantitative-1}
		1 - \|P^t\|_{\Pi}^4 \geq \left( \inf_{k \in \mathsf{K}} c_k^t \right)^2 \left( 1 - \sup_{k \in \mathsf{K}} \|P_k^t \|_{\Phi_k}^2 \right) \mbox{Gap}_{\bar{\Pi}}(\overline{P^*P}) .
	\end{equation}
	{If, furthermore, $P$ defines a positive semi-definite operator on $L_0^2(\Pi)$ and $P_k$ is reversible with respect to $\Phi_k$ for $k \in \mathsf{K}$, then there is the simpler bound
	\begin{equation} \label{ine:quantitative-2}
		1 - \|P\|_{\Pi}^2 \geq \left( \inf_{k \in \mathsf{K}} c_k \right) \left[ \inf_{k \in \mathsf{K}} \mbox{Gap}_{\Phi_k}(P_k) \right] \mbox{Gap}_{\bar{\Pi}}(\bar{P}).
	\end{equation}}
\end{theorem}

\begin{proof}
	We will establish \eqref{ine:quantitative-1} using Lemma \ref{lem:decomposition}; \eqref{ine:quantitative-2} can be established in a similar fashion using the original Markov chain decomposition result in Remark \ref{rem:madras}.
	
	Fix a positive integer~$t$.
	In Lemma~\ref{lem:decomposition}, take $(\Y,\F,\omega) = (\X,\SF,\Pi)$, $\Mtk = P^t$ and $\mtkfont{S} = P$.
	For $k \in \mathsf{K}$, let $\Y_k = \{k\} \times \Z_k$.
	Then $\F_k$ consists of sets of the form $\{k\} \times \mathsf{A}$, where $\mathsf{A} \in \A_k$, and $\omega_k(\{k\} \times \mathsf{A}) = \Phi_k(\mathsf{A})$ for $\mathsf{A} \in \A_k$.
	For $k \in \mathsf{K}$, $z \in \Z_k$, and $\mathsf{A} \in \A_k$, let
	$
	\Mtk_k((k,z), \{k\} \times \mathsf{A}) = P_k^t(z, \mathsf{A}).
	$
	Since $\Phi_k P_k = \Phi_k$, it holds that $\omega_k \Mtk_k = \omega_k$.
	Since $P((k,z), \{k\} \times \mathsf{A}) \geq c_k P_k(z,\mathsf{A})$ for $z \in \Z_k$ and $\mathsf{A} \in \mathcal{A}_k$, it holds that, for $(k,z) \in \Y_k$ and $\{k\} \times \mathsf{A} \in \F_k$,
	\[
	\Mtk((k,z), \{k\} \times \mathsf{A}) \geq c_k^t P_k^t(z, \mathsf{A}) \geq c \Mtk_k((k,z), \{k\} \times \mathsf{A}),
	\]
	where $c = \inf_{k \in \mathsf{K}} c_k^t$.
	Thus, the assumptions of Lemma~\ref{lem:decomposition} are satisfied.
	
	The next step is identifying the objects in \eqref{ine:decomposition}.
	Obviously, $\|\Mtk \mtkfont{S}^*\|_{\omega} = \|P^t P^*\|_{\Pi}$.
	Standard arguments show that $\|\Mtk_k\|_{\omega_k} = \|P_k^t\|_{\Phi_k}$.
	The distribution $\bar{\omega}$ corresponds to $\bar{\Pi}$, while the Mtk $\overline{\mtkfont{S}^* \mtkfont{S}}$ is $\overline{P^*P}$.
	Then, by Lemma~\ref{lem:decomposition},
	\begin{equation} \nonumber
		1 - \|P^t P^*\|_{\Pi}^2 \geq c^2 \left( 1 - \sup_{k \in \mathsf{K}} \|P_k^t \|_{\Phi_k}^2 \right) \mbox{Gap}_{\bar{\Pi}}(\overline{P^*P}). 
	\end{equation}
	Finally, note that
	\[
	\|P^t\|_{\Pi}^2 = \|P^t P^{*t}\|_{\Pi} \leq \|P^t P^*\|_{\Pi} \|P^{* t-1}\|_{\Pi} = \|P^t P^*\|_{\Pi} \|P^{t-1}\|_{\Pi}  \leq \|P^t P^*\|_{\Pi}.
	\]
	The desired result then follows.
\end{proof}

{
Theorem \ref{thm:quantitative} connects the convergence properties of the trans-dimensional chain, quantified by $\|P\|_{\Pi}$, to the convergence properties of the within- and between-model movements, quantified by the $\|P_k\|_{\Phi_k}$'s and $\mbox{Gap}_{\bar{\Pi}}(\overline{P^*P})$ (or $\mbox{Gap}_{\bar{\Pi}}(P)$), respectively.
In Section \ref{ssec:toy}, we use a toy example to test the sharpness of the bounds in Theorem \ref{thm:quantitative}, and investigate how the within- and between-model components may affect $\|P\|_{\Pi}$ and its bound.

\begin{remark} \label{rem:lower-2}
	By Remark~\ref{rem:lower}, we also have $1 - \|P\|_{\Pi}^2 \leq \mbox{Gap}_{\bar{\Pi}}(\overline{P^*P})$, and if $P$ is reversible, $1 - \|P\|_{\Pi} \leq \mbox{Gap}_{\bar{\Pi}}(\bar{P})$.
	Thus, $\|P\|_{\Pi}$ is controlled by $\mbox{Gap}_{\bar{\Pi}}(\overline{P^*P})$ from above and below.
	In particular, combining \eqref{ine:quantitative-1} with the above yields
	\[
	\frac{1}{4} \left( \inf_{k \in \mathsf{K}} c_k \right)^2 \left( 1 - \sup_{k \in \mathsf{K}} \|P_k \|_{\Phi_k}^2 \right) \leq \frac{1 - \|P\|_{\Pi}}{\mbox{Gap}_{\bar{\Pi}}(\overline{P^*P})} \leq 1.
	\]
	Similarly, if $P$ is positive semi-definite and $P_k$ is reversible for each~$k$,
	\[
	\frac{1}{2} \left( \inf_{k \in \mathsf{K}} c_k \right) \left[ \inf_{k \in \mathsf{K}} \mbox{Gap}_{\Phi_k}(P_k) \right] \leq \frac{1 - \|P\|_{\Pi}}{\mbox{Gap}_{\bar{\Pi}}(\bar{P}) } \leq 1.
	\]
\end{remark}

}

Quantities such as $\|\overline{P^*P}\|_{\Pi}$ and $\|P_k\|_{\Phi_k}$ may be difficult to compute in practice.
However, when $|\mathsf{K}| < \infty$, Theorem~\ref{thm:quantitative} immediately yields Theorem~\ref{thm:main}, which is stated again below:

\noindent{\bf Theorem \ref{thm:main}.} 
{\it
	Assume that \ref{H1} and \ref{H2} hold.
	Then $\|P^{t_0}\|_{\Pi} < 1$, and~$P$ is $L^2(\Pi)$ geometrically convergent.
}

\begin{proof}
	By Theorem~\ref{thm:quantitative}, it suffices to show that 
	\[
	\left( \min_{k \in \mathsf{K}} c_k^{t_0} \right)^2 \left( 1 - \max_{k \in \mathsf{K}} \|P_k^{t_0} \|_{\Phi_k}^2 \right) \mbox{Gap}_{\bar{\Pi}}(\overline{P^*P}) > 0.
	\]
	
	By (iii) in \ref{H1}, $\min_{k \in \mathsf{K}} c_k^{t_0} > 0$.
	By (ii) in \ref{H1}, $\max_{k \in \mathsf{K}} \|P_k^{t_0} \|_{\Phi_k}^2 < 1$.
	
	{
	It remains to show that $\mbox{Gap}_{\bar{\Pi}}(\overline{P^*P}) > 0$.
	Assume the opposite, i.e., $\mbox{Gap}_{\bar{\Pi}}(\overline{P^*P}) = 0$.
	Because $\mathsf{K}$ is finite and $\overline{P^*P}$ is reversible, this implies that the largest eigenvalue of $\overline{P^*P}$ is 1.
	It then follows that $\overline{P^*P}$ is reducible \citep[][Theorem 3.11]{hairer2006ergodic}.
	By (iii) in \ref{H1}, for $k, k' \in \mathsf{K}$,
	\[
	\begin{aligned}
		\overline{P^*P}(k, \{k'\}) &\geq \int_{\Z_k} \Phi_k(\df z) P((k,z), \{k\} \times \Z_k ) P((k,z), \{k'\} \times \Z_{k'}) \geq c_k \bar{P}(k,\{k'\}).
	\end{aligned}
	\]
	So $\bar{P}$ must be reducible as well.
	But this contradicts with \ref{H2}.
	Hence, $\mbox{Gap}_{\bar{\Pi}}(\overline{P^*P}) > 0$.
	}

\end{proof}

Theorem~\ref{thm:main} will be used to establish geometric convergence for two practical examples in Section~\ref{sec:examples} and another one in \ref{s2}.

\section{Examples} \label{sec:examples}

This section contains a toy example and two practical problems concerning variable selection and mixture models respectively.
For a third practical example concerning autoregression, see \ref{s2}.

{
\subsection{A toy chain} \label{ssec:toy}
}

We first use a toy algorithm to test the sharpness of the quantitative bounds in Theorem~\ref{thm:quantitative}.


Let $k_{\scriptsize\mbox{max}}$ and $n$ be positive integers.
Let $\mathsf{K} = \{1,\dots,k_{\scriptsize\mbox{max}}\}$.
Consider a simple scenario where all the $\Psi_k$'s are the same.
To be specific, for $k = 1,\dots,k_{\scriptsize\mbox{max}}$, let $\Z_k = \{1,\dots,n\}$, and let $\Psi_k$ be the counting measure on $\Z_k$.
Then $\Pi$ is the uniform distribution on $\X = \bigcup_{k \in \mathsf{K}} \{k\} \times \Z_k$.

We consider a type of MCMC algorithm targeting $\Pi$.
Given the current state $(k,z) \in \X$, the algorithm either makes a local or a global move, each with probability $1/2$.
A local move sends the underlying Markov chain to $(k,z')$ where $z' \in \Z_k$, and a global move sends the chain to $(k',z)$ where $k' \in \mathsf{K}$.
We consider three types of local moves and two types of global moves.
The three types of local moves are ``fast," ``slow," and ``varied."
In a fast local move, the chain randomly and uniformly selects $z' \in \Z_k$ to move to.
In a slow local move, the chain can either stay in place with probability $1/2$, or move to one of the within-model neighbors.
Within a model~$k$, two states $z$ and $z'$ are neighbors if they differ by~1, or if one of them is~1 and the other is~$n$.
In a varied local move, the movement of the chain varies with the current model~$k$.
To be precise, the chain stays in place with probability $1-1/k$, and move randomly and uniformly across $\Z_k$ with probability $1/k$.
The two types of global moves are ``fast" and ``slow."
In a fast global move, the chain randomly and uniformly selects a new model $k' \in \mathsf{K}$ to move to.
In a slow global move, the chain can stay in place with probability $1/2$, or move to one of the neighboring models.
Two models $k$ and $k'$ are neighbors if they differ by~1, or if one of them is~1 and the other is $k_{\scriptsize\mbox{max}}$.
Each combination of local and global move types gives rise to a concrete algorithm, and we may define six algorithms in this manner.

For a given algorithm, let $P$ be the Mtk of the underlying chain, and let $P_k$ be the Mtk associated with the local movement within model~$k$.
One can check that, for each of the six algorithms, $P$ is positive semi-definite, and so are the $P_k$'s.
The local and global behavior of~$P$ is summarized as follows.
\begin{itemize}
	\item When the local move type is fast, $\|P_k\|_{\Phi_k} = 0$.
	When the local move type is slow, $\|P_k\|_{\Phi_k}$ is a function of~$n$, and it goes to~1 as $n \to \infty$.
	When the local move is varied, $\|P_k\|_{\Phi_k} = 1-1/k$.
	\item When the global move type is fast, $\mbox{Gap}_{\bar{\Pi}}(\overline{P^*P}) = 3/4$ and $\mbox{Gap}_{\bar{\Pi}}(\bar{P}) = 1/2$.
	When the local move type is slow, $\mbox{Gap}_{\bar{\Pi}}(\overline{P^*P})$ is a function of $k_{\scriptsize\mbox{max}}$, and it goes to 0 as $k_{\scriptsize\mbox{max}} \to \infty$; the same goes for $\mbox{Gap}_{\bar{\Pi}}(\bar{P})$.
\end{itemize}

\begin{figure}
	\begin{center}
		\includegraphics[width=\textwidth]{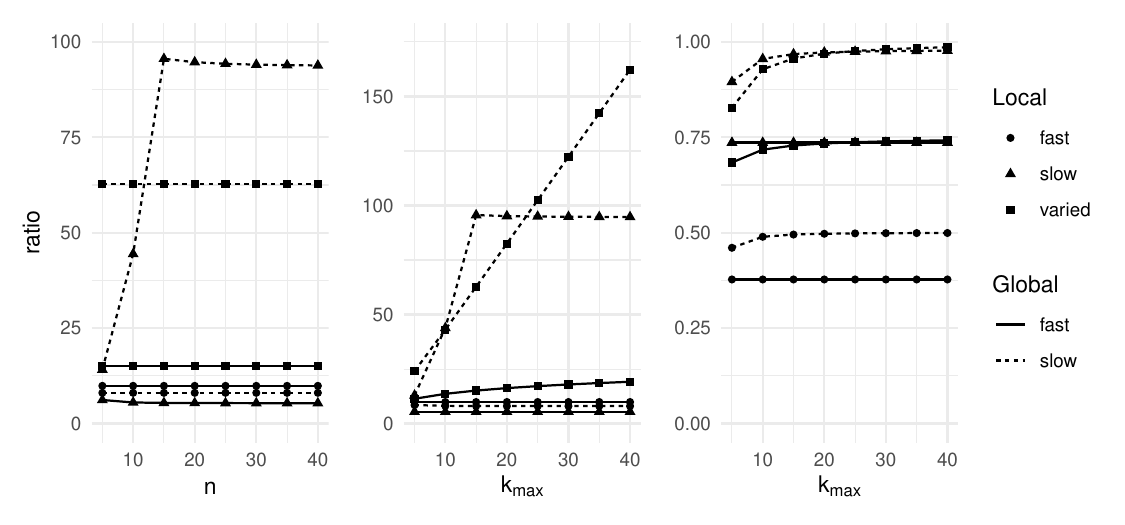}
		\caption{Performance of the quantitative bounds for the toy chain.
		Left: $(1-\|P\|_{\Pi})/(1-\|P\|_{\Pi}^{\dagger})$ for various values of~$n$ when $k_{\scriptsize\mbox{max}} = 15$.
		Middle: $(1-\|P\|_{\Pi})/(1-\|P\|_{\Pi}^{\dagger})$ for various values of $k_{\scriptsize\mbox{max}}$ when $n = 15$.
		Right: $(1-\|P\|_{\Pi}^{\dagger})/(1-\|P\|_{\Pi}^{\ddagger})$ for various values of $k_{\scriptsize\mbox{max}}$ when $n = 15$.
		} \label{fig:numeric}
	\end{center}
\end{figure}

Using Theorem \ref{thm:quantitative}, we may construct upper bounds on $\|P\|_{\Pi}$.
The bound derived from \eqref{ine:quantitative-1} with $c_k = 1/2$ and $t=1$ will be denoted by $\|P\|_{\Pi}^{\dagger}$, while the bound derived from \eqref{ine:quantitative-2}, which exploits reversibility, is denoted by $\|P\|_{\Pi}^{\ddagger}$.

Since $\X$ is finite, the true value of $\|P\|_{\Pi}$ can be computed.
We test the performance of the bounds through numerical simulation, and the results are given in Figure \ref{fig:numeric}.
Table \ref{tab:bound} loosely summarizes how the local and global move types affect the sharpness of $\|P\|_{\Pi}^{\dagger}$.
The bound $\|P\|_{\Pi}^{\ddagger}$ is sharper than $\|P\|_{\Pi}^{\dagger}$, but the two bounds are comparable.

\begin{table}
	\caption{Performance of the bound \eqref{ine:quantitative-1} for the toy chain} \label{tab:bound}
	\begin{center}
		\begin{tabular}{l|lll}
			\hline
			& Locally fast & Locally slow & Locally varied \\
			\hline
			Globally fast & well behaved & well behaved & deteriorates slowly as $k_{\scriptsize\mbox{max}} \to \infty$ \\
			Globally slow & well behaved & situation dependent & deteriorates as $k_{\scriptsize\mbox{max}} \to \infty$ \\
			\hline
		\end{tabular}
	\end{center}
\end{table}

\subsection{Variable selection in Bayesian probit regression}

\subsubsection{The model}


For $i = 1,\dots,n$, let $x_i = (x_{i,1}, \dots, x_{i,r})^{\top} \in \mathbb{R}^r$ be a known vector of predictors.
Let $Y_1, \dots, Y_n$ be independent binary responses, where  $Y_i$ follows a Bernoulli distribution with success probability $F(A+x_i^{\top} B)$, with $F(\cdot)$ being the cumulative distribution function of the standard normal distribution.
The scalar $A \in \mathbb{R}$ is an unknown intercept, while the vector $B= (B_1, \dots, B_r)^{\top} \in \mathbb{R}^r$ is an unknown regression coefficient.

To perform Bayesian variable selection, put a spike and slab prior on~$B$.
To be specific, let $K = (K_1, \dots, K_r)^{\top} \in \{0,1\}^r$.
Let $\mathsf{J}_K = \{j \in \{1,\dots,r\}: \, K_j = 1\}$.
Place a prior distribution on~$K$ that has probability mass function proportional to $p^{|\mathsf{J}_k|}$, where $p \in (0,1)$ is a hyperparameter.
Assume that given~$K$, the $B_j$'s are independent.
If $K_j = 0$, $B_j$ is set to be zero; otherwise, $B_j$ follows the  $\mbox{N}(0,\sigma^2)$ distribution, where $\sigma$ is a positive hyperparameter.
Thus, $K$ indicates the collection of relevant predictors.
The intercept $A$ is independent of $(B,K)$ and follows the $\mbox{N}(0,\sigma^2)$ distribution.
Let $Z = (A, B^{\top})^{\top}$.
Having observed $Y = (Y_1, \dots, Y_n)^{\top}$, the goal is to make inference about $(K, Z)$.

The parameter space, i.e., the range of $(K,Z)$, is $\X = \bigcup_{k \in \mathsf{K}} \{k\} \times \Z_k$, where $\mathsf{K} = \{0,1\}^r$, and,
for $k = (k_1, \dots,k_r) \in \mathsf{K}$,
$\Z_k$ is the set of $(\alpha, \beta_1, \dots, \beta_r) \in \mathbb{R}^{r+1}$ such that $\beta_j = 0$ whenever $k_j = 0$.
The posterior distribution~$\Pi$ of $(K,Z)$ given $Y = y = (y_1, \dots, y_n)$ is of the form~\eqref{eq:Pi}.
Evaluated at $z = (\alpha,\beta^{\top})^{\top} \in \Z_k$, the density function of $\Psi_k$ is
\[
\begin{aligned}
	\pi(k, z \mid y) =& \frac{1}{\sqrt{2\bm{\pi}} \sigma} \left( \frac{p}{\sqrt{2 \bm{\pi}} \sigma} \right)^{|\mathsf{J}_k|} \exp \left( - \frac{ \alpha^2 + \sum_{j \in \mathsf{J}_k} \beta_j^2 }{2 \sigma^2} \right) \\
	&\prod_{i=1}^n F \left( \alpha + \sum_{j \in \mathsf{J}_k} x_{i,j} \beta_j \right)^{y_i} \left[ 1 - F \left( \alpha + \sum_{j \in \mathsf{J}_k} x_{i,j} \beta_j \right)  \right]^{1-y_i}.
\end{aligned}
\]

\subsubsection{A reversible jump MCMC algorithm}

For a fixed model $k=(k_1,\dots,k_r)$, one can use a well-known data augmentation algorithm (a type of reversible MCMC algorithm) devised by \cite{albert1993bayesian} to sample from $\Phi_k$, the normalization of $\Psi_k$.
	The algorithm is now briefly described.
	Suppose that $\mathsf{J}_k = \{j_1, \dots, j_d\}$, where $d = |\mathsf{J}_k|$ and $j_1 < \cdots < j_d$.
	Let $M(k)$ denote the $n \times (d+1)$ matrix whose $i$th row is $(1, x_{i,j_1}, \dots, x_{i,j_d} )^{\top}$.
	Given the current state $z = (\alpha, \beta_1, \dots, \beta_r)^{\top} \in \Z_k$, the next state $z'= (\alpha', \beta'_1, \dots, \beta'_r)^{\top}$ is drawn through the following procedure:
	Independently for $i = 1,\dots,n$, draw $u_i$ from the $\mbox{N}( \alpha + \sum_{j=1}^r x_{i,j} \beta_j , 1^2 )$ distribution truncated to $(0,\infty)$ if $y_i = 1$ and to $(-\infty,0)$ otherwise.
	Let $u = (u_1, \dots, u_n)^{\top}$.
	Draw $(\alpha', \beta'_{j_1}, \dots, \beta'_{j_d})^{\top}$ from the normal distribution
	\[
	\mbox{N}_{d+1} \left( \left[M(k)^{\top}M(k) + \sigma^{-2} I_{d+1} \right]^{-1} M(k)^{\top} u, \; \left[M(k)^{\top} M(k) + \sigma^{-2} I_{d+1} \right]^{-1}  \right),
	\]
	and set the remaining elements of $z'$ to zero.

The reversible jump MCMC algorithm considered herein is a combination of the data augmentation algorithm and a standard reversible jump scheme.
Given the current state $(k,z)$, the algorithm randomly performs a U (update), B (birth), or D (death) move.
It is assumed that the probability of choosing a move depends on $(k,z)$ only through~$k$, and the probabilities are denoted by $q_{\scriptsize\mbox{U}}(k)$, $q_{\scriptsize\mbox{B}}(k)$, and $q_{\scriptsize\mbox{D}}(k)$, respectively.

\begin{itemize}
	\item {\it U move}: 
	Draw $z'$ using one iteration of Albert and Chib's data augmentation algorithm.
	Set the new state to $(k,z')$.
	
	\item {\it B move}:
	Randomly and uniformly choose an index $j$ from $\mathsf{J}_k^c$, where the complement is taken with respect to the set $\{1,\dots,r\}$.
	Change the $j$th element of~$k$ to~1 and call the resultant binary vector $k'$.
	Draw $b_*$ from some distribution on~$\mathbb{R}$ associated with a density function $g(\cdot \mid k, k', z, y)$.
	Replace the $(j+1)$th element of $z = (\alpha, \beta_1, \dots, \beta_r)^{\top}$ by $b_*$, and call the resultant vector $z'$.
	With probability
	\[
	\min \left\{ 1, \frac{ \pi(k', z' \mid y) \, q_{\scriptsize\mbox{D}}(k') \,  (|\mathsf{J}_k|+1)^{-1} }{ \pi(k,z \mid y) \, q_{\scriptsize\mbox{B}}(k) \, (r - |\mathsf{J}_k|)^{-1} g(b_* \mid k, k', z, y)  }  \right\},
	\]
	set the next state to $(k',z')$;
	otherwise, keep the old state.
	This move type is available only when $|\mathsf{J}_k| < r$.
	
	\item {\it D move}:
	Randomly and uniformly choose an index $j$ from $\mathsf{J}_k$.
	Change the $j$th element of~$k$ to 0 and call the resultant binary vector $k'$.
	Let $\beta_j$ be the $(j+1)$th element of $z$.
	Let $z'$ be the vector obtained by changing the $(j+1)$th element of $z$ to 0.
	With probability
	\[
	\min \left\{ 1, \frac{\pi(k',z' \mid y) \, q_{\scriptsize\mbox{B}}(k') \, (r - |\mathsf{J}_k| + 1)^{-1} g(\beta_j \mid k', k, z', y)}{\pi(k, z \mid y) \, q_{\scriptsize\mbox{D}}(k) \, |\mathsf{J}_k|^{-1}} \right\},
	\]
	set the next state to $(k',z')$;
	otherwise, keep the old state.
	This move type is available only when $|\mathsf{J}_k| > 0$.
\end{itemize}

The resultant trans-dimensional chain is reversible with respect to~$\Pi$.

\subsubsection{Convergence analysis}

For $k \in \mathsf{K}$, let $P_k$ be the Mtk of the data augmentation chain targeting $\Phi_k$.
\cite{chakraborty2016convergence} proved the following result regarding $P_k$ using the drift and minorization technique.
See also \cite{roy2007convergence}.

\begin{lemma} \label{lem:acgeo}
	For $k \in \mathsf{K}$, $P_k$ is $\Phi_k$-a.e. geometrically ergodic.
\end{lemma}

Given~$k$, the data augmentation Mtk $P_k$ is reversible with respect to $\Phi_k$.
One can then establish geometric convergence for the reversible jump chain.

\begin{proposition} \label{pro:acgeo}
	Suppose that $q_{\scriptsize\mbox{U}}(k) > 0$ for $k \in \mathsf{K}$, $q_{\scriptsize\mbox{B}}(k) > 0$ when $|\mathsf{J}_k| < r$, $q_{\scriptsize\mbox{D}}(k) > 0$ when $|\mathsf{J}_k| > 0$.
	Then the reversible jump chain is $L^2(\Pi)$-geometrically convergent and $\Pi$-a.e. geometrically ergodic.
\end{proposition}

\begin{proof}
	Apply Theorem~\ref{thm:main}.
	Let~$P$ be the Mtk of the reversible jump chain.
	By Lemma~\ref{lem:acgeo} and (i) of Lemma~\ref{lem:Pk-convergence}, $\|P_k\|_{\Phi_k} < 1$ for $k \in \mathsf{K}$.
	It follows that, for $k \in \mathsf{K}$, (i) and (ii) of \ref{H1} hold with $t_0 = 1$.
	Evidently, (iii) of \ref{H1} also holds with $c_k = q_{\scriptsize\mbox{U}}(k)$.
	
	To verify \ref{H2}, note that for $k,k' \in \mathsf{K}$, $\bar{P}(k,\{k'\}) > 0$ whenever $k$ and $k'$ differ by at most 1 element.
	Then it is clear that $\bar{P}$ is irreducible.
	(Alternatively, note that $P$ is $\Pi$-irreducible.)

	The chain is thus $L^2(\Pi)$ geometrically convergent.
	By Theorem 1 of \cite{roberts2001geometric}, it is $\Pi$-a.e. geometrically ergodic.
\end{proof}

\subsubsection{Application to a data set} \label{sssec:binary-example}

Geometric ergodicity allows one to estimate the importance of features in a variable selection problem with confidence.
The reversible jump algorithm is applied to the \verb|Spambase| data set \citep{misc_spambase_94}.
This data set contains $n = 4601$ emails. 
The response $Y_i$ indicates whether the $i$th email is spam.
Each email is associated with $r = 57$ attributes, including the frequency of certain words and the length of sequences of consecutive capital letters.
To perform variable selection, a spike and slab prior with $p = 0.5$ is used.

In the B move of the reversible jump algorithm, $g(\cdot \mid k, k', z, y)$ is chosen to be the density of a normal distribution, whose parameters are selected using ideas from \cite{brooks2003efficient}.
The probabilities of proposing birth and death moves are as follows:
\[
q_{\scriptsize\mbox{B}}(k) = \frac{1}{3} \min \left\{ 1, \frac{p \, (r - |\mathsf{J}_k|)}{|\mathsf{J}_k| + 1} \right\}, \quad q_{\scriptsize\mbox{D}}(k) = \frac{1}{3} \min \left\{ 1, \frac{|\mathsf{J}_k|}{p \, (r - |\mathsf{J}_k| + 1)} \right\}.
\]
By Proposition~\ref{pro:acgeo}, the reversible jump chain is $\Pi$-a.e. geometrically ergodic.

\begin{figure}
	\centering
	\includegraphics[width=0.7\textwidth]{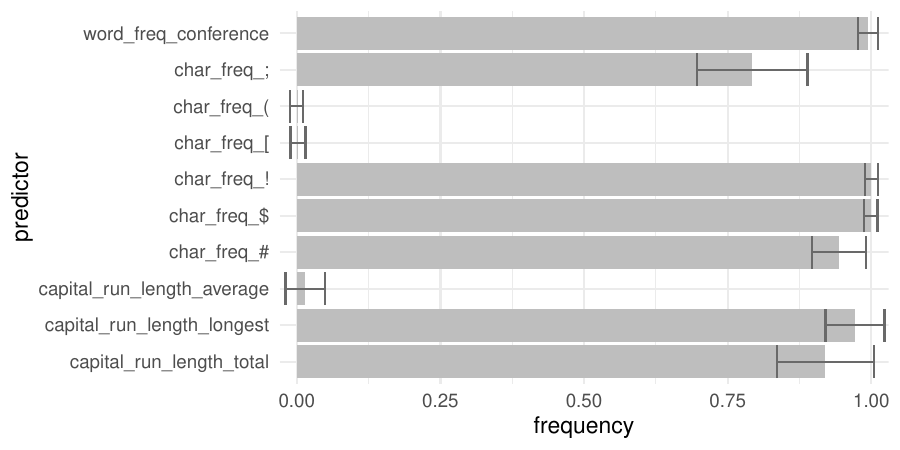}
	\caption{Estimated posterior probabilities of selecting each variable (attribute), with 95\% simultaneous confidence intervals.
		Only the last 10 variables are shown.
	} \label{fig:probit}
\end{figure}

A chain $(K(t), Z(t))_{t=1}^m$ of length $m = 10^5$ is simulated.
The quantities of interest are the posterior probabilities of $K_j = 1$ for $j = 1,\dots,57$, i.e., the posterior probability of any given predictor being present in the regression model.
They are estimated using the sample proportions $m^{-1} \sum_{t=1}^m K_j(t)$.
Under geometric ergodicity, the sample proportions are asymptotically normally distributed, and the asymptotic variances can be consistently estimated using the batch means method \citep{jones2006fixed}.
To avoid underestimation, which is a problem exhibited by batch means estimators when the Monte Carlo sample size is not sufficiently large \citep[][Section 4]{flegal2010batch}, we add $(\log m) \sqrt{1/b_m^2+b_m/m}$ to the estimated asymptotic variances, where $b_m \approx m^{0.6}$ is the batch size, and $1/b_m^2 + b_m/m$ is on the same order as the mean squared error of the batch means estimator \citep[][Section 3]{flegal2010batch}.
This adjustment is further discussed in Section \ref{app:wald} of \ref{s1}.
We construct 95\% simultaneous Wald confidence intervals for the posterior probabilities.
Bonferroni correction is used here, although more sophisticated multivariate methods could be considered; see Section \ref{sec:discussion}.
The confidence intervals for the last 10 variables (attributes) are presented in Figure~\ref{fig:probit}.
This figure shows how important each predictor is according to the MCMC simulation, as well as the errors of their estimated importance.

{\subsection{Gaussian mixture model} \label{ssec:mixture}}

\subsubsection{The model}


Let $Y_1, \dots, Y_n$ be iid random variables drawn from the mixture of $K$ normal distributions.
For $j = 1,\dots,K$, let $W_j$ be the weight associated with the $j$th normal distribution, and let $U_j$ and $T_j$ be, respectively, the mean and variance of that normal distribution.
Equivalently, we may formulate the model as follows.
Let $(Y_1, A_1), \dots, (Y_n, A_n)$ be iid random vectors that take values in $\mathbb{R} \times \{1,\dots,k\}$, where, for each $i$, $Y_i$ given $A_i$ follows the $\mbox{N}(U_{A_i}, T_{A_i})$ distribution, and marginally $P(A_i = j) = W_j$ for $j = 1,\dots,K$.
Suppose that $Y = (Y_1, \dots, Y_n)$ is observable, while $A = (A_1, \dots, A_n)$, $W = (W_1, \dots, W_K)$, $U = (U_1, \dots, U_K)$, $T = (T_1, \dots, T_K)$, and~$K$ are unknown.

To perform Bayesian analysis, we assume that $K$ has a prior probability mass function $k \mapsto f_K(k)$ that is supported on a finite set $\mathsf{K} = \{1, \dots, k_{\scriptsize\max}\}$.
We then put a Dirichlet prior on $W$, inverse gamma priors on $T$, and normal priors on $U$.
To address the label switching problem and enforce identifiability, we shall assume that $U_1 \leq U_2 \leq \cdots \leq U_K$ in the prior distribution.
To be precise, we shall assume that, given $K = k$, the prior density function of $(W,T,U)$ evaluated at $(w,\tau,u) = ((w_1, \dots, w_k), (\tau_1, \dots, \tau_k), (u_1, \dots, u_k))$ has the following form:
\begin{equation} \label{eq:fWTU}
	\begin{aligned}
		f_{W,T,U}(w,\tau, u \mid k) =& \left\{ \prod_{j=1}^k w_j^{\gamma - 1} \frac{b^c}{\Gamma(c)} \tau_j^{-c - 1} e^{-b/\tau_j} \frac{1}{\sqrt{2 \bm{\pi} \tau_0 \tau_j}} \exp\left[ - \frac{(u_j-u_0)^2}{2\tau_0 \tau_j}  \right] \ind_{(0,\infty)}(\tau_j) \right\} \\
		&  \frac{\Gamma(k\gamma)}{\Gamma(\gamma)^k} \ind_{\mathsf{S}_k}(w) \, k! \, \ind_{\mathsf{G}_k}(u).
	\end{aligned}
\end{equation}
In the above display, $\gamma, b, c, \tau_0, u_0$ are positive hyperparameters, $\Gamma(\cdot)$ is the gamma function, $\mathsf{S}_k = \{(w_1, \dots, w_k) \in (0,1)^k: \, \sum_{j=1}^k w_j = 1 \}$, $\mathsf{G}_k = \{(u_1, \dots, u_k) \in \mathbb{R}^k: \, u_1 \leq \cdots \leq u_k\}$.

The un-normalized posterior density of $(K,A,W,T,U)$ is then
\[
\pi(k,\alpha,w,\tau,u \mid y) = \left\{ \prod_{i=1}^n w_{\alpha_i} \frac{1}{\sqrt{2\bm{\pi} \tau_{\alpha_i} } } \exp\left[ - \frac{1}{2 \tau_{\alpha_i}} (y_i - u_{\alpha_i})^2 \right] \right\} f_{W,T,U}(w,\tau,u \mid k) f_K(k),
\]
where $\alpha_i$ denotes the $i$th element of $\alpha$.
The corresponding measure $\Pi$ has the form \eqref{eq:Pi}, with the density of $\Psi_k$ given by $(\alpha,w,\tau,u) \mapsto \pi(k, \alpha,w,\tau,u \mid y)$.
In this context, $\Z_k = \{1,\dots,k\}^n \times \mathsf{S}_k \times (0,\infty)^k \times \mathsf{G}_k$.

\subsubsection{A reversible jump algorithm}

The algorithm we consider is modified from \pcite{richardson1997bayesian} reversible jump algorithm, with a new within-model move type and simplified between-model move types.

We shall first propose an algorithm for sampling from $\Phi_K$ when $K$ is known.
For $k \in \mathsf{K}$ and $(w,\tau,u) \in \mathsf{S}_k \times (0,\infty)^k \times \mathbb{R}^k$, let $\tilde{f}_{W,T,U}(w,\tau,u \mid k)$ be the same as \eqref{eq:fWTU} but without the constraint $\ind_{\mathsf{G}_k}(u)$.
It can be shown that, given $k \in \mathsf{K}$, for $\alpha \in \{1,\dots,k\}^n$,
\[
\begin{aligned}
	&\tilde{\pi}_A(\alpha \mid k, y) \\
	:=& \int_{\mathsf{S}_k \times (0,\infty)^p \times \mathbb{R}^p} \left\{ \prod_{i=1}^n w_{\alpha_i} \frac{1}{\sqrt{2\bm{\pi} \tau_{\alpha_i} } } \exp\left[ - \frac{1}{2 \tau_{\alpha_i}} (y_i - u_{\alpha_i})^2 \right] \right\}  \tilde{f}_{W,T,U}(w,\tau,u) \, \df (w, \tau, \mu) \\
	\propto & \prod_{j=1}^k  \frac{ [\tau_0 \, n_j(\alpha) + 1]^{-1/2} \, \Gamma(n_j(\alpha) + \gamma) \, \Gamma(n_j(\alpha)/2 + c) }{ \left\{ \mathit{ss}_j(\alpha)/2 +u_0^2/(2\tau_0) -[ s_j(\alpha) + u_0/\tau_0]^2/[2 (n_j(\alpha) + 1/\tau_0)] + b \right\}^{n_j(\alpha)/2 + c} },
\end{aligned}
\]
where $n_j(\alpha) = \sum_{i=1}^n \ind_{\{j\}}(\alpha_i)$, $s_j(\alpha) = \sum_{i=1}^n  y_i \ind_{\{j\}}(\alpha_i)$, and $\mathit{ss}_j(\alpha) = \sum_{i=1}^n y_i^2 \ind_{\{j\}}(\alpha_i)$.
Let $z = (\alpha,w,\tau,u) \in \Z_k$ be the current state.
The next state $z' = (\alpha',w',\tau',u')$ is drawn via the following steps:
\begin{enumerate}
	\item Let $\alpha^{(0)} = (\sigma(\alpha_1), \dots, \sigma(\alpha_n))$, where $\sigma$ is a randomly and uniformly selected permutation of $\{1,\dots,k\}$.
	\item 
	For $i$ from 1 to $n$, do the following.
	Randomly and uniformly draw $j_i$ from $\{1,\dots,k\}$.
	Let $\alpha^{(i-1)}_{i \leftarrow j_i} \in \{1,\dots,k\}^n$ be the vector obtained from $\alpha^{(i-1)}$ by changing its~$i$th element to~$j_i$.
	With probability
	\[
	\min \left\{ 1, \frac{\tilde{\pi}_A(\alpha^{(i-1)}_{i \leftarrow j_i} \mid k, y)}{\tilde{\pi}_A(\alpha^{(i-1)} \mid k, y)} \right\},
	\]
	let $\alpha^{(i)} = \alpha^{(i-1)}_{i \leftarrow j_i}$; otherwise, let $\alpha^{(i)} = \alpha^{(i-1)}$.
	Denote $\alpha^{(n)}$ by $\alpha''$.
	
	\item 
	Draw $w'' = (w''_1, \dots, w''_k)$ from the Dirichlet distribution with concentration parameter $(n_1(\alpha'') + \gamma, \dots, n_k(\alpha'') + \gamma)$.
	
	\item 
	For $j = 1,\dots, k$, independently draw $\tau''_j$ from the inverse gamma distribution with shape parameter $c + n_j(\alpha'')/2$ and scale parameter
	\[
	\frac{\mathit{ss}_j(\alpha'')}{2} + \frac{u_0^2}{2\tau_0} - \frac{[s_j(\alpha'')+\mu_0/\tau_0]^2}{2[ n_j(\alpha'') + 1/\tau_0]} + b.
	\]
	
	\item For $j = 1,\dots,k$, independently draw $u_1'', \dots, u_k''$ from the normal distribution
	\[
	\mbox{N} \left( \frac{s_j(\alpha'') + u_0/\tau_0}{n_j(\alpha'') + 1/\tau_0}, \, \frac{\tau_j''}{n_j(\alpha'') + 1/\tau_0} \right).
	\]
	
	\item 
	Order $u_1'', \dots, u_j''$ so that we find distinct indices $j_1, \dots, j_k$ such that $u_{j_1}'' \leq \cdots \leq u_{j_k}''$.
	For $i = 1,\dots,n$, find the index $\ell_i \in \{1,\dots,k\}$ such that $\alpha_i'' = j_{\ell_i}$, and let $\alpha_i' = \ell_i$.
	For $\ell = 1,\dots,k$, let $w_{\ell}' = w_{j_{\ell}}''$, $\tau_{\ell}' = \tau_{j_{\ell}}''$, and $u_{\ell}' = u_{j_{\ell}}''$.
\end{enumerate}

We shall call the sampler the Metropolis re-ordering algorithm.
The following lemma is proved in Section~\ref{app:reordering-invariant} of \ref{s1}.
\begin{lemma} \label{lem:reordering}
	The underlying Markov chain of the Metropolis re-ordering algorithm leaves $\Phi_k$ invariant for $k \in \mathsf{K}$.
\end{lemma}
A nice property of the Metropolis re-ordering algorithm is that starting from any allocation $\alpha \in \{1,\dots,k\}^n$, it is possible for the chain to move to any allocation $\alpha' \in \{1,\dots,k\}^n$ in a single iteration.

Consider now a reversible jump algorithm for sampling from~$\Pi$, which is part of an algorithm constructed by \cite{richardson1997bayesian}.
When the current state is $(k,z) = (k,\alpha,w,\tau,u)$, the algorithm randomly performs a U, B, or D move.
It is assumed that the probabilities of choosing these moves depend on the current state only through~$k$, and are, respectively, $q_{\scriptsize\mbox{U}}(k)$, $q_{\scriptsize\mbox{B}}(k)$, $q_{\scriptsize\mbox{D}}(k)$.
The three move types are defined as follows:

\begin{itemize}
	\item {\it U move}:
	Draw $z'$ using one iteration of the Metropolis re-ordering algorithm.
	Set the new state to $(k,z')$.
	
	\item {\it B move}:
	Draw $(w_*, \tau_*, u_*)$ from some distribution on $(0,1) \times (0,\infty) \times \mathbb{R}$ with density function $g(\cdot \mid k, \alpha, w, \tau, u)$.
	Find $\ell \in \{1,\dots,k+1\}$ such that $u_j \leq u_*$ whenever $j < \ell$, and $u_j \geq u_*$ whenever $j \geq \ell$ --- that is, $u_*$ is the $\ell$th smallest number in the set $\{u_1, \dots, u_k, u_*\}$.
	Let $\alpha' = (\alpha_1', \dots, \alpha_n')$ be such that, for $i \in \{1,\dots,n\}$, $\alpha_i' = \alpha_i$ if $\alpha_i < \ell$ and $\alpha_i' = \alpha_i + 1$ if $\alpha_i \geq \ell$.
	Let
	\[
	\begin{aligned}
		&w' = ((1-w_*)w_1, \dots, (1-w_*) w_{\ell-1}, w_*, (1-w_*) w_{\ell}, \dots, (1-w_*) w_k ), \\
		&\tau' = (\tau_1, \dots, \tau_{\ell-1}, \tau_*, \tau_{\ell}, \dots, \tau_k), \quad u' = (u_1, \dots, u_{\ell-1}, u_*, u_{\ell}, \dots, u_k).
	\end{aligned}
	\]
	With probability
	\[
	\min\left\{ 1, \frac{\pi(k+1, \alpha', w', \tau', u' \mid y) \, q_{\scriptsize\mbox{D}}(k+1) \, [\sum_{j=1}^{k+1} \ind_{\{0\}}(n_j(\alpha')) ]^{-1} \, (1-w_*)^{k-1} }{ \pi(k, \alpha, w, \tau, u \mid y) \, q_{\scriptsize\mbox{B}}(k) \, g(w_*, \tau_*, u_* \mid k, \alpha, w, \tau, u) }  \right\},
	\]
	set the new state to $(k+1, \alpha', w', \tau', u')$;
	otherwise, keep the old state.
	This move is available only when $k < k_{\scriptsize\mbox{max}}$.
	
	\item {\it D move}:
	Let $\mathsf{E}_k(\alpha) \subset \{1,\dots,k\}$ be the set of $j$'s such that $n_j(\alpha) = 0$.
	Keep the old state if $\mathsf{E}_k(\alpha) = \emptyset$, and follow the procedure below otherwise.
	Randomly and uniformly select $\ell$ from $\mathsf{E}_k(\alpha)$.
	Let $\alpha' = (\alpha'_1, \dots, \alpha'_n)$ be such that, for $i = 1,\dots,n$, $\alpha'_i = \alpha_i$ if $\alpha_i < \ell$ and $\alpha'_i = \alpha_i - 1$ if $\alpha_i > \ell$.
	Let
	\[
	\begin{aligned}
		&w' = (w_1/(1-w_{\ell}), \dots, w_{\ell-1}/(1-w_{\ell}), w_{\ell+1}/(1-w_{\ell}), w_k/(1-w_{\ell})), \\
		&\tau' = (\tau_1, \dots, \tau_{\ell-1}, \tau_{\ell+1}, \dots, \tau_k), \quad u' = (u_1, \dots, u_{\ell-1}, u_{\ell+1}, \dots, u_k).
	\end{aligned}
	\]
	With probability
	\[
	\min \left\{ 1, \frac{\pi(k-1, \alpha', w', \tau', u' \mid y) \, q_{\scriptsize\mbox{B}}(k-1) \, g(w_{\ell}, \tau_{\ell}, u_{\ell} \mid k-1, \alpha', w', \tau', u') }{ \pi(k, \alpha, w, \tau, u \mid y) \,  q_{\scriptsize\mbox{D}}(k) \, [\sum_{j=1}^k \ind_{\{0\}}(n_j(\alpha)) ]^{-1} \, (1-w_{\ell})^{k-2}}  \right\},
	\]
	set the new state to $(k-1, \alpha', w', \tau', u')$; otherwise, keep the old state.
	This move is available only when $k > 1$.
\end{itemize}

\begin{figure} 
	\begin{center}
		\includegraphics[width=0.48\textwidth]{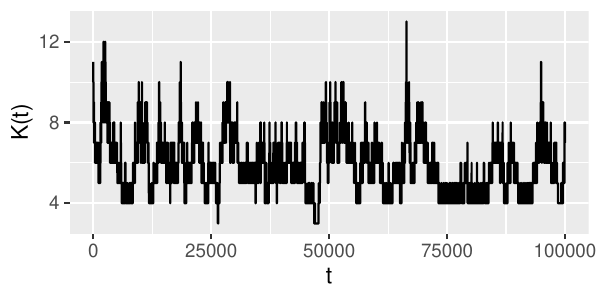} \; \includegraphics[width=0.48\textwidth]{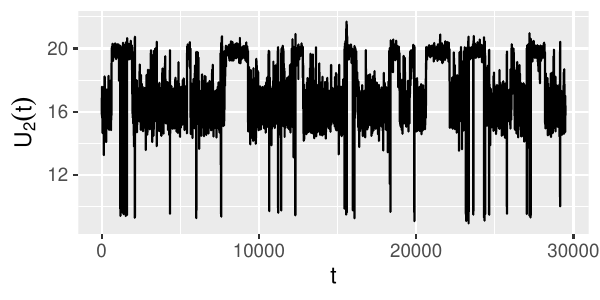} \\
		\includegraphics[width=0.48\textwidth]{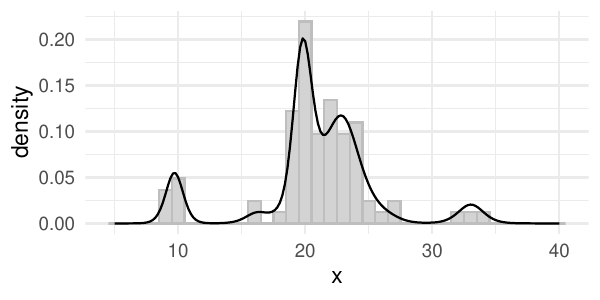} \;
		\includegraphics[width=0.48\textwidth]{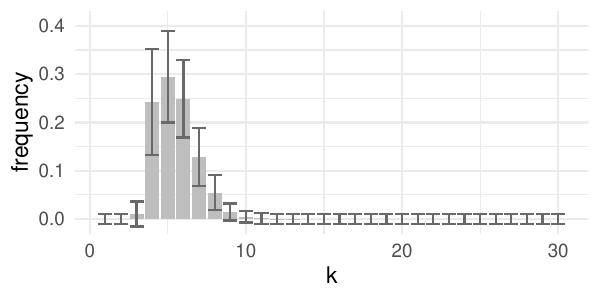}
		\caption{Top left: trace plot of $K(t)$; top right: trace plot of $U_2(t)$ when $K(t) = 5$;
			bottom left: histogram and predictive density for the galaxy data set; 
			bottom right: estimated posterior probabilities of $K = k$ and their 95\% simultaneous confidence intervals.
			Hyperparameters: $k_{\scriptsize\mbox{max}} = 30$, $\gamma = 2$, $b = 2$, $c = 3$, $\tau_0 = 1000$, $u_0 = 20$.
			$f_K(k) \propto 1/2^k$. 
		} \label{fig:galaxy}
	\end{center}
\end{figure}

The resultant chain has $\Pi$ as its stationary distribution due to the reversible jump construction.
On the other hand, the Metropolis-reordering algorithm is not classified as a well-known reversible algorithm, and it remains unclear whether the U move type is reversible or positive semi-definite.

For illustration, the algorithm is applied to the galaxy data set described by \cite{roeder1990density} and studied by \cite{richardson1997bayesian}.
In the B move, $w_*$ is drawn from the beta distribution with parameters $1$ and $n$, $\tau_*$ is drawn from the inverse gamma distribution with shape parameter~$c$ and scale parameter~$b$, and $u_*$ is drawn from the $\mbox{N}(0, \tau_0 \tau_*)$ distribution.
Following \cite{green1995reversible}, the birth and death probabilities are set to
\[
q_{\scriptsize\mbox{B}}(k) = \frac{1}{3} \min \left\{ 1, \frac{f_K(k+1)}{f_K(k)} \right\}, \, q_{\scriptsize\mbox{D}}(k) = \frac{1}{3} \min \left\{ 1, \frac{f_K(k-1)}{f_K(k)} \right\}.
\]
The trans-dimensional chain $(K(t), A(t), W(t), T(t), U(t))_{t=1}^{\infty}$ is simulated for $10^5$ iterations.
The empirical performance of the algorithm is shown in Figure \ref{fig:galaxy}.
The predictive density evaluated at a point~$x$ is the sample average of $\sum_{j=1}^{K(t)} W_j(t) f(x \mid U_j(t), T_j(t))$, where $f(\cdot \mid u, \tau)$ denotes the density of the $\mbox{N}(u,\tau)$ distribution.
We can see that the sampler appears to perform well empirically, especially in terms of within-model moves.

\subsubsection{Convergence analysis}

For $k \in \mathsf{K}$, let $P_k$ be the Mtk associated with the Metropolis re-ordering algorithm targeting $\Phi_k$.
We prove the following lemma in Section \ref{app:reordering-geo} of \ref{s1}.
The proof is constructed based on the fact that, in the Metropolis re-ordering algorithm, the next state depends on the current state $(\alpha,w,\tau,u)$ only through $\alpha$, which takes value in a finite set.

{
\begin{lemma} \label{lem:reordering-geo}
	For $k \in \mathsf{K}$, $\|P_k^2\|_{\Phi_k} < 1$.
\end{lemma}
}

With Lemma \ref{lem:reordering-geo} in hand, it is now straightforward to establish the geometric ergodicity of the reversible jump algorithm.

\begin{proposition}
	Suppose that $q_U(k) > 0$ for $k \in \mathsf{K}$, $q_B(k) > 0$ for $k < k_{\scriptsize\mbox{max}}$, and $q_D(k) > 0$ for $k > 0$.
	Then the reversible jump chain is $L^2(\Pi)$ geometrically convergent and $\Pi$-a.e. geometrically ergodic.
\end{proposition}

\begin{proof}
	Apply Theorem \ref{thm:main}.
	Let $P$ be the Mtk of the reversible jump chain.
	For $k \in \mathsf{K}$, (i) and (iii) in \ref{H1} hold with $c_k = q_U(k)$.
	By Lemma \ref{lem:reordering-geo}, (ii) in \ref{H1} holds with $t_0 = 2$.
	
	To verify \ref{H2}, note that $\bar{P}(k,\{k'\}) > 0$ whenever $|k-k'| \leq 1$.
	It is then evident that $\bar{P}$ is irreducible.
	
	Thus, $P$ is $L^2(\Pi)$ geometrically convergent.
	By Theorem~1 of \cite{roberts2001geometric}, it is $\Pi$-a.e. geometrically ergodic.
\end{proof}

\begin{remark}
	The reversible jump algorithm can be further improved by adding more sophisticated between-model move types such as split and merge \citep{richardson1997bayesian,zhang2004learning}.
	Geometric convergence is preserved as long as the between-model movements remain irreducible.
\end{remark}

Geometric ergodicity allows us to construct 95\% simultaneous Wald confidence intervals for the posterior probabilities of $K = k$ for $k = 1,\dots, k_{\scriptsize\mbox{max}}$.
This is shown for the galaxy data set in Figure \ref{fig:galaxy}.
We have added $(\log m) \sqrt{1/b_m^2+b_m/m}$ to the estimated asymptotic variances of the estimated probabilities, as in Section \ref{sssec:binary-example}.

{\section{Discussion} \label{sec:discussion} }

$L^2(\Pi)$ geoemtrically convergence implies $\Pi$-a.e. geometric ergodicity.
Harris recurrence, a regularity condition commonly used in MCMC analysis, can be enforced provided that we restrict our attention to some absorbing set \citep[][Theorem 9.0.1]{meyn2012markov}.
See also \cite{roberts2006harris} for conditions for Harris ergodicity in the context of trans-dimensional chains.

Under geometric ergodicity, one may utilize methods from e.g., \cite{jones2006fixed,vats2019multivariate} to construct confidence regions for uncertainty quantification.
In our examples, we used Bonferroni correction to construct multiple Wald confidence intervals, but more sophisticated methods exist \citep{vats2019multivariate,robertson2020assessing}.

{
Some of the formulas in the proof of Lemma \ref{lem:decomposition} (found in Section \ref{app:decomposition} of \ref{s1}) can be formulated in terms of Dirichlet forms.
Dirichlet forms may be used to study Markov chains in terms of the conductance \citep{lawler1988bounds}, Peskun-Tierney ordering \citep{andrieu2019peskun}, and functional inequalities \citep{power2024weak}.
}

An obvious avenue for future research is obtaining practical quantitative convergence bounds for trans-dimensional chains.
In particular, ways of computing or estimating $\mbox{Gap}_{\bar{\Pi}}(\overline{P^*P})$ and $\mbox{Gap}_{\bar{\Pi}}(\bar{P})$ would be useful for selecting good proposal distributions in a reversible jump algorithm. 
Whether our convergence bounds can be further sharpened and extended to the case where $\mathsf{K}$ is infinite is also of interest.

\vspace{0.5cm}

\noindent{\bf Acknowledgment:}
We thank the Editor, an anonymous Associate Editor, and two anonymous Referees for their helpful comments.

\bigskip
\begin{center}
{\large\bf SUPPLEMENTARY MATERIALS}
\end{center}

{\bf \namedlabel{s1}{Supplement I}: Minor Results and Technical Proofs. } This document contains a proof for the assertion that $\Pi$-irreducibility implies \ref{H2}, proofs for Lemmas \ref{lem:decomposition}, \ref{lem:reordering}, \ref{lem:reordering-geo}, and simulation results concerning Monte Carlo confidence intervals.

{\bf \namedlabel{s2}{Supplement II}: Autoregression with Laplace errors. } This document contains an application of the theory and methods herein to an autoregressive model with Laplace errors and unknown model order.

\bibliographystyle{Chicago}

\bibliography{qinbib}

\newpage

\begin{center} 
	{\bf \LARGE Supplement I: Minor Results and Technical Proofs}
\end{center}

\appendix

\section{$\Pi$-irreducibility implies \ref{H2}} \label{app:h2}

Assume that $P$ is $\Pi$-irreducible.
Fix $k, k' \in \mathsf{K}$.
By Proposition 4.2 of \cite{meyn2012markov}, for $z \in \Z_k$,
\[
\sum_{t=1}^{\infty} 2^{-t} P^t((k,z), \{k'\} \times \Z_{k'}) > 0.
\]
By the monotone convergence theorem, 
\[
\sum_{t=1}^{\infty} 2^{-t} \int_{\Z_k} \Phi_k(\df z) P^t((k,z), \{k'\} \times \Z_{k'}) > 0.
\]
That is, one can find a positive integer $s$ such that 
\[
\int_{\Z_k} \Phi_k(\df z) P^s((k,z), \{k'\} \times \Z_{k'}) > 0.
\]
Let $\{K(t), Z(t)\}_{t=0}^{\infty}$ be a chain associated with~$P$ such that $K(0) = k$ and $Z(t) \sim \Phi_k$.
Then the above display reads $\mbox{Pr}(K(s) = k') > 0$.
We may partition the event $[K(s) = k']$ according to the path of $K(t)$ for $t = 0,\dots,s$.
It follows that there exist $k(0), \dots, k(s) \in \mathsf{K}$ such that $k(0) = k$, $k(s) = k'$, and that
\[
\mbox{Pr} \left( K(t) = k(t) \text{ for } t = 0,\dots,s \right) > 0.
\] 
In particular, for $t \in \{0,\dots,s-1\}$, $\mbox{Pr}(K(t) = k(t), \, K(t+1) = k(t+1)) > 0$, i.e.,
\[
\int_{\Z_k} \mu_t(\df z) P((k(t),z), \{k(t+1)\} \times \Z_{k(t+1)} ) > 0,
\]
where $\mu_t(\mathsf{A}) = \int_{\Z_k} \Phi_k(\df z) P^t((k,z), \{k(t)\} \times \mathsf{A})$ for $\mathsf{A} \in \A_{k(t)}$.
For $\mathsf{A} \in \A_{k(t)}$,
\[
\mu_t(\mathsf{A}) \leq \frac{1}{\Psi_k(\Z_k)} \sum_{k'' \in \mathsf{K}} \Psi_{k''}(\df z) P^t((k'',z), \{k(t)\} \times \mathsf{A}) = \frac{\Psi_{k(t)}(\mathsf{A})}{\Psi_k(\Z_k)}.
\]
Then it holds that, for $t \in \{0,\dots,s-1\}$,
\[
\int_{\Z_{k(t)}} \Psi_{k(t)}(\df z) \, P((k(t),z), \{k(t+1)\} \times \Z_{k(t+1)} ) > 0,
\]
i.e., $\bar{P}(k(t), \{k(t+1)\}) > 0$.
This in turn implies that $\bar{P}^s(k,\{k'\}) > 0$.
Thus, $\bar{P}$ is irreducible, i.e., \ref{H2} holds.

\section{Proof of Lemma~\ref{lem:decomposition}} \label{app:decomposition}

Recall some notations: 
$(\Y,\F,\omega)$ is a probability space.
$\mtkfont{S}$ and $\Mtk$ are Mtks that have $\omega$ as their stationary distributions.
The state space~$\Y$ has a disjoint decomposition $\Y = \bigcup_{k \in \mathsf{K}} \Y_k$.
For $k \in \mathsf{K}$, $\F_k$ is the restriction of~$\F$ to $\Y_k$, $\omega_k$ is the normalized restriction of~$\omega$ on $\Y_k$, and $\bar{\omega}(\{k\}) = \omega(\Y_k)$.
The Mtk $\Mtk_k: \Y_k \times \F_k \to [0,1]$ satisfies $\omega_k \Mtk_k = \omega_k$, and $\Mtk(y,\mathsf{B}) \geq c\Mtk_k(y, \mathsf{B})$ for $y \in \Y_k$ and $\mathsf{B} \in \F_k$, where $c \in [0,1]$.
Finally, the kernel
\[
\overline{\mtkfont{S}^* \mtkfont{S}}(k,\{k'\}) = \frac{1}{\omega(\Y_k)} \int_{\Y} \omega(\df y) \mtkfont{S}(y, \Y_k) \mtkfont{S}(y, \Y_{k'}).
\]
defines a positive semi-definite operator on $L_0^2(\bar{\omega})$.
The goal is to show that
\[
1 - \|\Mtk \mtkfont{S}^*\|_{\omega}^2 \geq c^2 \left(1 - \sup_{k \in \mathsf{K}} \|\Mtk_k\|_{\omega_k}^2 \right) (1 - \|\overline{\mtkfont{S}^* \mtkfont{S}}\|_{\bar{\omega}} ).
\]

\begin{proof}
	Let $f \in L_0^2(\omega)$ be arbitrary.
	Then
	\begin{equation} \label{eq:fnormdecomp-1}
		\|f\|_{\omega}^2 - \|\Mtk \mtkfont{S}^* f\|_{\omega}^2 = \|f\|_{\omega}^2 - \|\mtkfont{S}^*f\|_{\omega}^2 + \|\mtkfont{S}^*f\|_{\omega}^2 - \|\Mtk \mtkfont{S}^* f\|_{\omega}^2.
	\end{equation}
	For $y \in \Y$ and $\mathsf{B} \in \F$, let
	\[
	\hat{\Mtk}(y,\mathsf{B}) = \sum_{k \in \mathsf{K}} 1_{\Y_k}(y) \Mtk_k(y, \mathsf{B} \cap \Y_k).
	\]
	Then $\omega \hat{\Mtk} = \omega$, and $\Mtk(y, \cdot) \geq c \hat{\Mtk}(y, \cdot)$ for $y \in \Y$.
	Thus, there exists an Mtk $\mtkfont{R}: \Y \times \F \to [0,1]$ such that $\omega \mtkfont{R} = \omega$ and $\Mtk = c\hat{\Mtk} + (1-c) \mtkfont{R}$.
	{ By the Cauchy-Schwarz inequality and the fact that the norms of Markov operators are no greater than one,}
	\begin{equation} \nonumber
		\begin{aligned}
			\|\Mtk \mtkfont{S}^*f\|_{\omega}^2 =& c^2 \|\hat{\Mtk} \mtkfont{S}^* f\|_{\omega}^2 + (1-c)^2 \|\mtkfont{R} \mtkfont{S}^* f\|_{\omega}^2 + 2 c(1-c)  \langle \hat{\Mtk} \mtkfont{S}^* f, \mtkfont{R} \mtkfont{S}^* f \rangle_{\omega}  \\
			\leq & c^2 \|\hat{\Mtk} \mtkfont{S}^* f\|_{\omega}^2 + (1-c^2) \|\mtkfont{S}^* f\|_{\omega}^2.
		\end{aligned}
	\end{equation}
	It follows that
	\begin{equation} \label{ine:fnormdecomp-2}
		\|\mtkfont{S}^*f\|_{\omega}^2 - \|\Mtk \mtkfont{S}^* f\|_{\omega}^2 \geq c^2 \left( \|\mtkfont{S}^*f\|_{\omega}^2 - \|\hat{\Mtk } \mtkfont{S}^* f\|_{\omega}^2 \right).
	\end{equation}
	
	Define an operator $E: L_0^2(\omega) \to L_0^2(\omega)$ as
	\begin{equation} \nonumber
		Ef(y) = \sum_{k \in \mathsf{K}} 1_{\Y_k}(y) \frac{1}{\omega(\Y_k)} \int_{\Y_k} \omega(\df y') f(y') = \sum_{k \in \mathsf{K}} 1_{\Y_k}(y) \int_{\Y_k} \omega_k(\df y') f(y').
	\end{equation}
	Then~$E$ is an orthogonal projection: $E^2 = E^* = E$.
	Its range is the set of $L^2_0(\omega)$ functions that are constant on any given $\Y_k$.
	It is easy to see that $\hat{\Mtk }E = E$.
	In fact, $E\hat{\Mtk } = E$ as well.
	To see this, note that, for $g \in L_0^2(\omega)$,
	\[
	\begin{aligned}
		E \hat{\Mtk } g(y) &= \sum_{k \in \mathsf{K}} 1_{\Y_k}(y) \int_{\Y_k} \omega_k(\df y') \int_{\Y_k} \Mtk _k(y', \df y'') g(y'') \\
		&= \sum_{k \in \mathsf{K}} 1_{\Y_k}(y) \int_{\Y_k} \omega_k(\df y'') g(y'').
	\end{aligned}
	\]
	It then follows that 
	\begin{equation} \label{eq:ET}
		\begin{aligned}
			\|\mtkfont{S}^*f\|_{\omega}^2 - \|\hat{\Mtk } \mtkfont{S}^* f\|_{\omega}^2 =& \|[E + (I-E)] \mtkfont{S}^*f\|_{\omega}^2 - \|\hat{\Mtk } [E + (I-E)] \mtkfont{S}^* f\|_{\omega}^2 \\
			=& \|E\mtkfont{S}^*f \|_{\omega}^2 + \|(I-E) \mtkfont{S}^* f\|_{\omega}^2 - \|\hat{\Mtk } E \mtkfont{S}^* f \|_{\omega}^2 - \|\hat{\Mtk } (I-E) \mtkfont{S}^* f \|_{\omega}^2 - \\
			& 2 \langle \hat{\Mtk } E \mtkfont{S}^*f, \hat{\Mtk } (I - E)  \mtkfont{S}^* f \rangle_{\omega} \\
			=& \|(I-E) \mtkfont{S}^* f\|_{\omega}^2 - \|\hat{\Mtk } (I-E) \mtkfont{S}^* f \|_{\omega}^2.
		\end{aligned}
	\end{equation}
	Here, $I$ is the identity operator.
	{ We have used the following properties of an orthogonal projection: $E = E^*$ and $(I-E)E = E(I-E) = 0$.}

	For $k \in \mathsf{K}$, let $M_k \mtkfont{S}^* f: \Y_k \to \mathbb{R}$ be such that
	\[
	M_k \mtkfont{S}^* f (y) = (I-E) \mtkfont{S}^* f(y) = \mtkfont{S}^* f(y) - \int_{\Y_k} \omega_k(\df y') \mtkfont{S}^* f(y')
	\]
	for $y \in \Y_k$.
	Then $M_k\mtkfont{S}^* f$ is in $L_0^2(\omega_k)$, and
	\[
	\|(I - E) \mtkfont{S}^* f \|_{\omega}^2 = \sum_{k \in \mathsf{K}} \int_{\Y_k} \omega(\df y) \, [M_k \mtkfont{S}^* f(y)]^2 = \sum_{k \in \mathsf{K}} \omega(\Y_k) \|M_k \mtkfont{S}^* f \|_{\omega_k}^2.
	\]
	Moreover,
	\begin{equation} \label{ine:fnormdecomp-3}
		\begin{aligned}
			\|\hat{\Mtk } (I-E) \mtkfont{S}^* f \|_{\omega}^2 &= \sum_{k \in \mathsf{K}} \int_{\Y_k} \omega(\df y) \, [\Mtk _k M_k \mtkfont{S}^* f(y)]^2 \\
			&= \sum_{k \in \mathsf{K}} \omega(\Y_k) \|\Mtk _k M_k \mtkfont{S}^* f \|_{\omega_k}^2 \\
			&\leq \left( \sup_{k \in \mathsf{K}} \|\Mtk _k\|_{\omega_k}^2 \right) \|(I - E) \mtkfont{S}^* f \|_{\omega}^2 .
		\end{aligned}
	\end{equation}
	
	Note that $\|\mtkfont{S}^*\|_{\omega} = \|\mtkfont{S} \|_{\omega} \leq 1$, so $\|f\|_{\omega}^2 - \|\mtkfont{S}^*f\|_{\omega}^2 \geq 0$.
	Combining this fact with~\eqref{eq:fnormdecomp-1} to~\eqref{ine:fnormdecomp-3} yields
	\[
	\begin{aligned}
		\|f\|_{\omega}^2 - \|\Mtk \mtkfont{S}^* f\|_{\omega}^2 & \geq \|f\|_{\omega}^2 - \|\mtkfont{S}^*f\|_{\omega}^2 + c^2 \left( 1 - \sup_{k \in \mathsf{K}} \|\Mtk _k\|_{\omega_k}^2 \right) \|(I - E) \mtkfont{S}^* f \|_{\omega}^2 \\
		& \geq c^2 \left( 1 - \sup_{k \in \mathsf{K}} \|\Mtk _k\|_{\omega_k}^2 \right) \left[ \|f\|_{\omega}^2 - \|\mtkfont{S}^*f\|_{\omega}^2 + \|(I - E) \mtkfont{S}^* f \|_{\omega}^2  \right] \\
		&= c^2 \left( 1 - \sup_{k \in \mathsf{K}} \|\Mtk _k\|_{\omega_k}^2 \right) \left[ \|f\|_{\omega}^2 - \|E \mtkfont{S}^*f\|_{\omega}^2  \right] \\
		&\geq c^2 \left( 1 - \sup_{k \in \mathsf{K}} \|\Mtk _k\|_{\omega_k}^2 \right) (1 - \|E\mtkfont{S}^*\|_{\omega}^2) \|f\|_{\omega}^2.
	\end{aligned}
	\]
	Since~$f$ is arbitrary,
	\[
	1 - \|\Mtk \mtkfont{S}^*\|_{\omega}^2 \geq c^2 \left( 1 - \sup_{k \in \mathsf{K}} \|\Mtk _k\|_{\omega_k}^2 \right) (1 - \|E\mtkfont{S}^*\|_{\omega}^2).
	\]
	
	To complete the proof, it suffices to show that $\|E\mtkfont{S}^*\|_{\omega}^2 = \|\overline{\mtkfont{S}^* \mtkfont{S}}\|_{\bar{\omega}}$.
	Note that $\|ES^*\|_{\omega} = \|(E\mtkfont{S}^*)^*\|_{\omega} = \|\mtkfont{S}E\|_{\omega}$.
	Denote the range of~$E$ by~$H$.
	Then
	\begin{equation} \label{eq:SEnorm-1}
		\|E\mtkfont{S}^*\|_{\omega}^2 = \|\mtkfont{S}E\|_{\omega}^2 = \sup_{f \in L_0^2(\omega) \setminus \{0\}} \frac{\|\mtkfont{S}Ef\|_{\omega}^2}{\|Ef\|_{\omega}^2 + \|(I-E)f\|_{\omega}^2} =  \sup_{f \in H \setminus \{0\}} \frac{\|\mtkfont{S} f\|_{\omega}^2}{\|f\|_{\omega}^2}.
	\end{equation}
	The transformation that maps a function $g: \mathsf{K} \to \mathbb{R}$ to $\sum_{k \in \mathsf{K}} g(k) 1_{\Y_k}$
	defines an isomorphism from $L_0^2(\bar{\omega})$ to~$H$.
	Call this transformation~$U$.
	Because~$U$ is unitary,
	\begin{equation} \label{eq:SEnorm-2}
		\sup_{f \in H \setminus \{0\}} \frac{\|\mtkfont{S}f\|_{\omega}^2}{\|f\|_{\omega}^2} =  \sup_{g \in L_0^2(\bar{\omega})  \setminus \{0\}} \frac{\|\mtkfont{S} Ug\|_{\omega}^2}{\|g\|_{\bar{\omega}}^2}.
	\end{equation} 
	For $g \in L_0^2(\bar{\omega})$,
	\begin{equation} \label{eq:SEnorm-3}
		\|\mtkfont{S} Ug\|_{\omega}^2 = \int_{\Y} \omega(\df y) \left[ \sum_{k \in \mathsf{K}} \mtkfont{S}(y, \Y_k) g(k) \right]^2 = \langle g, \overline{\mtkfont{S}^*\mtkfont{S}} g \rangle_{\bar{\omega}}.
	\end{equation}
	Note that $\overline{\mtkfont{S}^* \mtkfont{S}}$ defines a positive semi-definite operator on $L_0^2(\bar{\omega})$.
	Then, by~\eqref{eq:SEnorm-1},~\eqref{eq:SEnorm-2}, and~\eqref{eq:SEnorm-3}, $\|E\mtkfont{S}^*\|_{\omega}^2 = \|\overline{\mtkfont{S}^* \mtkfont{S}}\|_{\bar{\omega}}$ as desired.
\end{proof}


{
	Finally, we derive the assertion in Remark \ref{rem:lower}:
	$
	1 - \|\Mtk\|_{\omega}^2 \leq \mbox{Gap}_{\bar{\omega}}(\overline{\Mtk^* \Mtk}) \leq 1 - \|\bar{\mtkfont{T}}\|_{\bar{\omega}}^2.
	$
	The result trivially holds if $|\mathsf{K}| = 1$ since, in this case, $ \mbox{Gap}_{\bar{\omega}}(\overline{\Mtk^* \Mtk}) =  1$ and $\|\bar{\Mtk}\|_{\bar{\omega}} = 0$.
	Assume that $|\mathsf{K}| > 1$.
	Then the range of the projection $E$ is not just $\{0\}$.
	By \eqref{eq:SEnorm-2} and \eqref{eq:SEnorm-3},
	\[
	\|\mtkfont{S}\|_{\omega}^2 \geq \sup_{f \in H \setminus \{0\}} \frac{\|\mtkfont{S} f\|_{\omega}^2}{\|f\|_{\omega}^2} = \|\overline{\mtkfont{S}^* \mtkfont{S}}\|_{\bar{\omega}} = 1 - \mbox{Gap}_{\bar{\omega}} ( \overline{\mtkfont{S}^* \mtkfont{S}} ).
	\]
	Moreover, by the Cauchy-Schwarz inequality,
	\[
	\begin{aligned}
		\|\bar{\mtkfont{S}}\|_{\bar{\omega}}^2 &= \sup_{g \in L_0^2(\bar{\omega})  \setminus \{0\}} \|g\|_{\bar{\omega}}^{-2} \sum_{k \in \mathsf{K}} \bar{\omega}(\{k\}) \left[  \int_{\Y_k} \omega_k(\df y) \sum_{k' \in \mathsf{K}} S(y, \Y_{k'}) g(k') \right]^2 \\
		&\leq \sup_{g \in L_0^2(\bar{\omega}) \setminus \{0\}} \|g\|_{\bar{\omega}}^{-2} \sum_{k \in \mathsf{K}} \bar{\omega}(\{k\})   \int_{\Y_k} \omega_k(\df y) \left[ \sum_{k' \in \mathsf{K}} S(y, \Y_{k'}) g(k') \right]^2 \\
		&= \sup_{g \in L_0^2(\bar{\omega}) \setminus \{0\}} \frac{ \langle g, \overline{\mtkfont{S}^*\mtkfont{S}} g \rangle_{\bar{\omega}} }{\|g\|_{\bar{\omega}}^2} \\
		&= \|\overline{S^*S}\|_{\bar{\omega}}.
	\end{aligned}
	\]
	Letting $\mtkfont{S} = \Mtk$ yields the desired inequalities.
}

{
	\section{Proof of Lemma~\ref{lem:reordering}} \label{app:reordering-invariant}
}

Let $k \in \mathsf{K}$ be fixed.
First we note that the un-normalized posterior density associated with $\Phi_k$ can be written into the form
\[
\phi_k(\alpha, w, \tau, u) = C_k \tilde{\psi}_k(\alpha, w, \tau, u) \, k! \, \ind_{\mathsf{G}_k}(u),
\]
where $C_k$ is a normalizing constant, and 
\[
\begin{aligned}
	\tilde{\psi}_k(\alpha, w, \tau, u) =& \left\{ \prod_{i=1}^n w_{\alpha_i} \frac{1}{\sqrt{2\bm{\pi} \tau_{\alpha_i} } } \exp\left[ - \frac{1}{2 \tau_{\alpha_i}} (y_i - u_{\alpha_i})^2 \right] \right\} \\
	& \left\{ \prod_{j=1}^k w_j^{\gamma - 1} \frac{b^c}{\Gamma(c)} \tau_j^{-c - 1} e^{-b/\tau_j} \frac{1}{\sqrt{2 \bm{\pi} \tau_0 \tau_j}} \exp\left[ - \frac{(u_j-u_0)^2}{2\tau_0 \tau_j}  \right] \ind_{(0,\infty)}(\tau_j) \right\} \\
	&  \frac{\Gamma(k\gamma)}{\Gamma(\gamma)^k} \ind_{\mathsf{S}_k}(w)
\end{aligned}
\]
It can be checked that $C_k \tilde{\psi}_k(\cdot)$ is a probability density function that is invariant under label switching.
A label switch associated with a permutation $\sigma: \{1,\dots,k\} \to \{1,\dots,k\}$ is a one-to-one function on $\tilde{\Z}_k := \{1,\dots,k\}^n \times \mathsf{S}_k \times (0,\infty)^k \times \mathbb{R}^k$ that results from re-labeling the $j$th Gaussian component in the mixture model as the $\sigma(j)$'th component for $j = 1,\dots,k$.
To be precise, when applied to $(\alpha, w, \tau, u) \in \tilde{\Z}_k$, it maps $\alpha = (\alpha_1, \dots, \alpha_n)$ to $L_{\sigma,1} (\alpha) := (\sigma(\alpha_1), \dots, \sigma(\alpha_n))$, $w = (w_1, \dots, w_k)$ to $L_{\sigma,2}(w) := (w_{\sigma^{-1}(1)}, \dots, w_{\sigma^{-1}(k)})$, $\tau$ to $L_{\sigma,2}(\tau)$ and $u$ to $L_{\sigma,2}(u)$.
We shall use $\tilde{\Phi}_k$ to denote the distribution associated with $C_k \tilde{\psi}_k(\cdot)$.

In the Metropolis re-ordering algorithm, with some abuse of notation, denote the current state by $(A,W,T,U)$, and denote the random elements generated by steps 1 to 6 by $A^{(0)}$, $A''$, $W''$, $T''$, $U''$, $(A',W',T',U')$, respectively.
Assume that $(A,W,T,U) \sim \Phi_k$.
The goal is to show that $(A',W',T',U') \sim \Phi_k$.

Denote the collection of permutations on $\{1,\dots,k\}$ by $\Sigma_k$.
By the invariance of $\tilde{\psi}_k$ under label switching, for $\mathsf{B} \subset \{1,\dots,k\}^n$, 
\[
\begin{aligned}
	\mbox{Pr}(A^{(0)} \in \mathsf{B}) 
	=& \frac{1}{k!} \sum_{\sigma \in \Sigma_k} \mbox{Pr}(L_{\sigma,1}(A) \in \mathsf{B}) \\
	=&  C_k \sum_{\sigma \in \Sigma_k} \sum_{\alpha \in \{1,\dots,k\}^n} \int_{\mathsf{S}_k} \int_{(0,\infty)^k} \int_{\mathbb{R}^k} \ind_{\mathsf{B}}(L_{\sigma,1}(\alpha)) \, \ind_{\mathsf{G}_k}(u) \, \tilde{\psi}_k(\alpha, w, \tau, u) \, \df u \, \df \tau \, \df w \\
	=&  C_k \sum_{\sigma \in \Sigma_k} \sum_{\alpha \in \{1,\dots,k\}^n} \int_{\mathsf{S}_k} \int_{(0,\infty)^k} \int_{\mathbb{R}^k} \ind_{\mathsf{B}}(L_{\sigma,1}(\alpha)) \, \ind_{L_{\sigma,2}(\mathsf{G}_k)}(L_{\sigma,2}(u)) \\
	& \tilde{\psi}_k(L_{\sigma,1}(\alpha), L_{\sigma,2}(w), L_{\sigma,2}(\tau), L_{\sigma,2}(u)) \, \df u \, \df \tau \, \df w \\
	=&  C_k \sum_{\sigma \in \Sigma_k} \sum_{\tilde{\alpha} \in \{1,\dots,k\}^n} \int_{\mathsf{S}_k} \int_{(0,\infty)^k} \int_{\mathbb{R}^k} \ind_{\mathsf{B}}(\tilde{\alpha}) \, \ind_{L_{\sigma,2} (\mathsf{G}_k)}(\tilde{u}) \, \tilde{\psi}_k(\tilde{\alpha}, \tilde{w}, \tilde{\tau}, \tilde{u}) \, \df \tilde{u} \, \df \tilde{\tau} \, \df \tilde{w} \\
	=& C_k \sum_{\tilde{\alpha} \in \{1,\dots,k\}^n} \int_{\mathsf{S}_k} \int_{(0,\infty)^k} \int_{\mathbb{R}^k} \ind_{\mathsf{B}}(\tilde{\alpha}) \, \tilde{\psi}_k(\tilde{\alpha}, \tilde{w}, \tilde{\tau}, \tilde{u}) \, \df \tilde{u} \, \df \tilde{\tau} \, \df \tilde{w}.
\end{aligned}
\]
Note that we have utilized the fact that the determinant of $L_{\sigma,2}$ is~1, and that $\sum_{\sigma} \ind_{L_{\sigma,2}(\mathsf{G}_k)}(\tilde{u}) = 1$ for $\tilde{u} \in \mathbb{R}^k$ outside a Lebesgue measure zero set.
The above display shows that $A^{(0)}$ has the same distribution as $\tilde{A}$ if $(\tilde{A}, \tilde{W}, \tilde{T}, \tilde{U}) \sim \tilde{\Phi}_k$.
In fact, it is easy to see that the probability mass function of this distribution is precisely $\tilde{\pi}_A(\cdot \mid k, y)$.
Step 2 in the Metropolis re-ordering algorithm is but a sequence of Metropolis Hastings steps targeting this distribution.
Thus, the distribution of $A''$ remains the same.
It can be checked that, given $A''$,  $(W'', T'', U'')$ follows precisely the conditional distribution of $(\tilde{W}, \tilde{T}, \tilde{U})$ given $\tilde{A}$ if $(\tilde{A}, \tilde{W}, \tilde{T}, \tilde{U}) \sim \tilde{\Phi}_k$.
Therefore, $(A'', W'', T'', U'') \sim \tilde{\Phi}_k$.
Again by the label-switching invariance, for $\mathsf{B} \in \Z_k$,
\[
\begin{aligned}
	& \mbox{Pr}((A',W',T',U') \in \mathsf{B}) \\
	=& \sum_{\sigma \in \Sigma_k} \mbox{Pr} ( (L_{\sigma,1}(A''), L_{\sigma,2}(W''), L_{\sigma,2}(T''), L_{\sigma,2}(U'')) \in B, \; L_{\sigma,2}(U'') \in \mathsf{G}_k ) \\
	=& C_k \sum_{\sigma \in \Sigma_k} \sum_{\alpha'' \in \{1,\dots,k\}^n} \int_{\mathsf{S}_k} \int_{(0,\infty)^k} \int_{\mathbb{R}^k} \ind_{\mathsf{B}}(L_{\sigma,1}(\alpha''), L_{\sigma,2}(w''), L_{\sigma,2}(\tau''), L_{\sigma,2}(u'') ) \, \ind_{\mathsf{G}_k}(L_{\sigma,2}(u'')) \\
	& \tilde{\psi}_k(\alpha'', w'', \tau'', u'') \, \df u'' \, \df \tau'' \, \df w'' \\
	=& C_k \sum_{\sigma \in \Sigma_k} \sum_{\alpha' \in \{1,\dots,k\}^n} \int_{\mathsf{S}_k} \int_{(0,\infty)^k} \int_{\mathbb{R}^k} \ind_{\mathsf{B}} (\alpha', w', \tau', u') \, \ind_{\mathsf{G}_k}(u') \, \tilde{\psi}_k(\alpha', w', \tau', u') \, \df u' \, \df \tau' \, \df w' \\
	=& \Phi_k(\mathsf{B}).
\end{aligned}
\]
This completes the proof.

{
	\section{Proof of Lemma~\ref{lem:reordering-geo}} \label{app:reordering-geo}
}

Let $f \in L_0^2(\Phi_k)$ be arbitrary.
Then by Lemma \ref{lem:reordering}, $P_k f \in L_0^2(\Phi_k)$, and $\|P_k f\|_{\Phi_k} \leq \|f\|_{\Phi_k}$.
Denote by $\mathcal{H}_A$ the subspace of functions in $L_0^2(\Phi_k)$ that depend on $(\alpha, w, \tau, u)$ only through~$\alpha$.
Then $P_k f \in \mathcal{H}_A$ since the measure $P_k((\alpha,w,\tau,u), \cdot)$ depends on $(\alpha,w,\tau,u)$ only through~$\alpha$.
It follows that $\|P_k^2 f\|_{\Phi_k} \leq \|P_k|_{\mathcal{H}_A}\|_{\Phi_k} \|P_k f\|_{\Phi_k} \leq \|P_k|_{\mathcal{H}_A}\|_{\Phi_k} \|f\|_{\Phi_k}$, where $P_k |_{\mathcal{H}_A}: \mathcal{H}_A \to \mathcal{H}_A$ is $P_k$ restricted to $\mathcal{H}_A$, and $\|P_k|_{\mathcal{H}_A}\|_{\Phi_k} = \sup_g \|P_k g\|_{\Phi_k}/\|g\|_{\Phi_k}$ where the supremum is taken over $\mathcal{H}_A \setminus \{0\}$.
To prove $\|P_k^2\|_{\Phi_k} < 1$, it suffice to show that $\|P_k |_{\mathcal{H}_A} \|_{\Phi_k} < 1$.
Since $\mathcal{H}_A$ is finite dimensional, bounded linear operators on $\mathcal{H}_A$ must be norm-attaining.
Thus, to show that  $\|P_k |_{\mathcal{H}_A} \|_{\Phi_k} < 1$, it suffices to show that $\|P_k g \|_{\Phi_k} < \|g\|_{\Phi_k}$ for any $g \in \mathcal{H}_A \setminus \{0\}$.

Let $g \in \mathcal{H}_A \setminus \{0\}$ be arbitrary.
Contrary to what we wish to prove, assume that $\|P_k g \|_{\Phi_k} = \|g\|_{\Phi_k}$.
Abusing notations we write $g(\alpha,w,\tau,u)$ as $g(\alpha)$.
Then $P_k g(\alpha) := P_k g(\alpha, w, \tau, u)$ can be written as $\sum_{\alpha' \in \{1,\dots,k\}^n} p(\alpha,\alpha') g(\alpha')$, where $p(\alpha,\alpha') = P_k((\alpha,w,\tau,u), \{\alpha'\} \times \mathsf{S}_k \times (0,\infty)^n \times \mathsf{G}_k)$ for $(w,\tau,u) \in  \mathsf{S}_k \times (0,\infty)^n \times \mathsf{G}_k$.
Let $\pi_A(\alpha) = \int_{\mathsf{S}_k \times (0,\infty)^n \times \mathsf{G}_k} \phi_k(\alpha,w,\tau,u) \, \df(w,\tau,u)$ for $\alpha \in \{1,\dots,k\}^n$, where $\phi_k$ is the density of $\Phi_k$.
Then $\pi_A(\alpha) > 0$ for $\alpha \in \{1,\dots,k\}^n$, and by Lemma \ref{lem:reordering}, $\sum_{\alpha \in \{1,\dots,k\}^n} \pi_A(\alpha) \, p(\alpha, \alpha') = \pi_A(\alpha')$ for $\alpha' \in \{1,\dots,k\}^n$.
One can derive derive
\[
\begin{aligned}
	& \|g\|_{\Phi_k}^2 = \sum_{\alpha \in \{1,\dots,k\}^n} \pi_A(\alpha) \sum_{\alpha' \in \{1,\dots,k\}^n} p(\alpha,\alpha') \, g(\alpha')^2, \\
	& \|P_k g\|_{\Phi_k}^2 = \sum_{\alpha \in \{1,\dots,k\}^n} \pi_A(\alpha) \left[ \sum_{\alpha' \in \{1,\dots,k\}^n} p(\alpha,\alpha') \, g(\alpha') \right]^2.
\end{aligned}
\]
Then, by the Cauchy-Schwarz inequality, $\|P_k g\|_{\Phi_k}^2 < \|g\|_{\Phi_k}^2$ unless, for $\alpha \in \{1,\dots,k\}^n$, the function $\alpha' \mapsto \sqrt{p(\alpha, \alpha') } \, g(\alpha')$ is proportional to $\sqrt{p(\alpha,\alpha')}$, i.e., $g(\alpha')$ is constant on the set $\{\alpha': \, p(\alpha,\alpha') > 0\}$.
By the construction of the Metropolis re-ordering algorithm, $p(\alpha,\alpha') > 0$ for $\alpha, \alpha' \in \mathsf{K}$.
Thus, $g$ must be a constant function on $\mathsf{K}$.
But this cannot happen, as $g$ is nonzero, and $\mathcal{H}_A$, as a subset of $L_0^2(\Phi_k)$, does not contain nonzero constant functions.
Therefore, $\|P_k g \|_{\Phi_k} < \|g\|_{\Phi_k}$, which leads to the desired result.

{
	\section{Wald confidence intervals} \label{app:wald}
}

As mentioned in Section \ref{sssec:binary-example}, batch means estimators may underestimate the asymptotic variance when the sample size is not sufficiently large.
Closely related to this, Wald confidence intervals may suffer from low coverage if the Monte Carlo sample size is not sufficiently large \citep{flegal2010batch,vollset1993confidence}.
In Section \ref{sssec:binary-example}, we proposed adding $(\log m) \sqrt{1/b_m^2 + b_m/m}$ to the batch means estimates of asymptotic variances, where $m$ is the Monte Carlo sample size, and $b_m$ is the batch size.
We shall apply this technique to the toy example in Section \ref{ssec:toy}.

In the toy example, for $k \in \mathsf{K}$, $\pi_k := \Pi(\{k\} \times \Z_k) = 1/k_{\scriptsize\mbox{max}}$, and $|\Z_k| = n$.
Take $k_{\scriptsize\mbox{max}} = 10$ and $n = 15$, so that $\pi_k = 0.1$ for each~$k$.
We employ the chain with slow local and global moves, which is simulated for $m=5000$ or $m = 5 \times 10^3$ iterations.
Simultaneous $1-\alpha$ Wald confidence intervals are constructed for $(\pi_1, \dots, \pi_{10})$, where $\alpha$ is 0.1 or 0.05.
Bonferroni correction is used.
The estimated asymptotic variance is either inflated by adding $(\log m) \sqrt{1/b_m^2 + b_m/m}$ or unchanged, where $b_m \approx m^{0.6}$.
The experiment is repeated 1000 times and the empirical coverage rate is compared to the nominal converge rate.
The average half-width of the $10 \times 1000$ intervals is also recorded.

The results are given in Table \ref{tab:wald}.
We see that inflating the asymptotic variance significantly enhances the empirical coverage rate when the sample size is insufficient, although the resultant intervals tend to be conservative.

\begin{table} \caption{Empirical simultaneous coverage rates of confidence intervals}\label{tab:wald}
	\centering
	\begin{tabular}{ccccc}
		\hline
		$m$ & $1-\alpha$ & inflated & empirical coverage & mean half-width \\
		\hline
		$5 \times 10^3$  & 0.90 & No & 0.796 & 0.038 \\
		$5 \times 10^3$  & 0.90 & Yes & 0.997 & 0.059 \\
		$5 \times 10^3$  & 0.95 & No & 0.842 & 0.041 \\
		$5 \times 10^3$  & 0.95 & Yes & 0.999 & 0.065 \\
		$5 \times 10^4$  & 0.90 & No & 0.888 & 0.013 \\
		$5 \times 10^4$  & 0.90 & Yes & 0.996 & 0.018 \\
		$5 \times 10^4$  & 0.95 & No & 0.935 & 0.014 \\
		$5 \times 10^4$  & 0.95 & Yes & 0.999 & 0.020 \\
		\hline
	\end{tabular}
\end{table}

\newpage

\begin{center} 
	{\bf \LARGE Supplement II: Autoregression with Laplace errors}
\end{center}

Bayesian autoregression with an unknown model order is a scenario where trans-dimensional MCMC naturally applies.
See \cite{troughton1998reversible}, \cite{ehlers2002efficient}, \cite{vermaak2004reversible}.
Consider a Bayesian autoregression with Laplace errors.
This model is practically relevant for its use in Bayesian quantile regression \citep{yu2001bayesian}.

\section{The model}

Suppose that $Y_i \in \mathbb{R}$ and $x_i \in \mathbb{R}^p$ satisfy, for $i = 1, \dots, n$, 
\[
Y_i = \sum_{j=1}^K A_j Y_{i-j} + x_i^{\top} B + \sqrt{T} E_i,
\]
where $E_1, \dots, E_n$ are iid random errors with a Laplace density $f_E(\epsilon) = e^{-|\epsilon|/2}/4$, $K$~is the unknown model order, $A_1, \dots, A_K$ are unknown autoregression coefficients,~$B$ is an unknown $p$-dimensional regression coefficient, and~$T$ is an unknown dispersion parameter.
If $K = 0$, the summation from $j = 1$ to~$K$ is interpreted as zero.
The predictors $(x_i)_{i=1}^n$ are known, while the responses $(Y_i)_{i=1}^n$ are observable.
To ensure that the model is well-specified, assume that $(Y_i)_{i=1}^n$ has a known starting sequence, $(Y_i)_{i=-k_{\scriptsize \mbox{max}} + 1}^0 = (y_i)_{i=-k_{\scriptsize \mbox{max}} + 1}^0$, where $k_{\scriptsize \mbox{max}}$ is a positive integer.
The goal is to make inference about the parameters~$K$, $(A_j)_{j=1}^K$,~$B$, and~$T$.

Let $Y = (Y_1, \dots, Y_n)^{\top} \in \mathbb{R}^n$ and $A = (A_1, \dots, A_K)^{\top} \in \mathbb{R}^K$.
Given $(K,A,B,T) = (k,\alpha, \beta, \tau)$, the likelihood of~$Y$ evaluated at $y = (y_1, \dots, y_n)^{\top}$ is
\[
f(y \mid k, \alpha, \beta, \tau) = \frac{1}{4^n \tau^{n/2}} \exp \left( - \frac{1}{2\sqrt{\tau}} \sum_{i=1}^n \left|y_i - x_i^{\top} \beta - w_{i,k}^{\top} \alpha \right| \right),
\] 
where  $w_{i,k} = (y_{i-1}, \dots, y_{i-k})^{\top}$ for $i = 1,\dots,n$.

To perform Bayesian analysis, place a prior density on $(K,A,B,T)$ of the form
\[
f_K(k) \, f_A(\alpha \mid k, \tau)^{\ind_{\{0\}^c}(k)} f_B(\beta \mid \tau) f_T(\tau).
\]
To be precise, $f_K$ is an arbitrary probability mass function that is positive on $\mathsf{K}= \{0, \dots, k_{\scriptsize \mbox{max}}\};$ 
$f_A(\cdot \mid k, \tau)$ is the density of the $\mbox{N}_k(0, \sigma^2 \tau I_k)$ distribution, where~$\sigma$ is a positive hyperparameter, and $I_k$ is the $k \times k$ identity matrix;
$f_B(\beta \mid \tau)$ is the density of the $\mbox{N}_p(0, \sigma^2 \tau I_p)$ distribution;
$f_T(\tau)$ is proportional to $\tau^{-1}$.
The resultant (un-normalized) posterior density is
\[
\tilde{\pi}(k, \alpha, \beta, \tau \mid y) = f(y \mid k, \alpha,\beta,\tau) f_K(k) \, f_A(\alpha \mid k, \tau)^{\ind_{\{0\}^c}(k)} f_B(\beta \mid \tau) f_T(\tau).
\]
The posterior distribution (if proper) is intractable even when~$k$ is given.
To facilitate sampling, we follow \cite{liu1996bayesian} and \cite{choi2013analysis}, and introduce a sequence of auxiliary random variables $U = (U_1, \dots, U_n)^{\top}$.
Given $(K,A,B,T,Y) = (k, \alpha, \beta, \tau,y)$, the conditional distribution of~$U$ is a product of inverse Gaussian distributions, with density function
\[
\begin{aligned}
	&f_{U \mid \cdot}(u \mid k, \alpha, \beta, \tau, y) \\
	=& \prod_{i=1}^n \left\{ \frac{1}{\sqrt{8\bm{\pi} u_i^3}} \exp \left[ -  \frac{u_i (y_i - x_i^{\top} \beta - w_{i,k}^{\top} \alpha )^2}{2\tau} + \frac{|y_i - x_i^{\top} \beta - w_{i,k}^{\top} \alpha|}{2\sqrt{\tau}}  - \frac{1}{8u_i} \right]  \, \ind_{(0,\infty)}(u_i) \right\},
\end{aligned}
\]
where $u = (u_1, \dots, u_n)^{\top}$, and $\bm{\pi}$ denotes the ratio of a circle's circumference to its diameter, not to be confused with the density function~$\pi$ defined below.
Consider the augmented posterior density of $(K, A, B, T, U)$, given by
\[
\pi(k,\alpha,\beta,\tau,u \mid y) = f_{U \mid \cdot}(u \mid k, \alpha, \beta, \tau, y) \, \tilde{\pi}(k, \alpha, \beta, \tau \mid y),
\]
where the argument $(k, \alpha, \beta, \tau, u)$ can take values in $\bigcup_{k \in \mathsf{K}} \{k\} \times \Z_k$, with $\Z_k = \mathbb{R}^k \times \mathbb{R}^p \times (0,\infty) \times (0,\infty)^n$.
The corresponding measure~$\Pi$ has the form~\eqref{eq:Pi}.
The map $(\alpha, \beta, \tau, u) \mapsto \pi(k, \alpha, \beta, \tau, u \mid y)$ gives the density of $\Psi_k$ with respect to the Lebesgue measure on $\Z_k$.

For $k \in \mathsf{K}$, let $W(k)$ be the $n \times (p+k)$ matrix whose $i$th row is $(x_i^{\top}, w_{i,k}^{\top})$.
In what follows, assume the following:
\begin{enumerate}
	\item [\namedlabel{P1}{(P1)}] For $k \in \mathsf{K}$, $W(k)$ has full column rank, and~$y$ is not in the column space of $W(k)$.
\end{enumerate}
It can then be shown that $\Psi_k(\Z_k) < \infty$ for $k \in \mathsf{K}$, and~$\Pi$ is a proper posterior distribution.
The introduction of the auxiliary variables $U_i$ leads to a two-component Gibbs sampler that can be used to sample from $\Phi_k$, the normalization of~$\Psi_k$. 
In what follows, the Gibbs sampler is combined with a standard reversible jump scheme to create a trans-dimensional MCMC sampler targeting~$\Pi$.

\section{A reversible jump MCMC algorithm} \label{sssec:autogibbs}

When $K$ is known to be $k \in \mathsf{K}$, a two-component Gibbs sampler can be used to sample from $\Phi_k$ \citep{choi2013analysis}.
When the current state is $(\alpha,\beta,\tau,u) \in \Z_k$, the sampler draws the next state $(\alpha',\beta',\tau',u')$ via the following steps:
\begin{enumerate}
	\item Draw $u' = (u_1', \dots,u_n')$ from the conditional distribution of $U$ given $(K,A,B,T,Y) = (k, \alpha, \beta, \tau,y)$, which is associated with the density $f_{U \mid \cdot}(u' \mid k, \alpha, \beta, \tau, y)$.
	\item 
	Let $Q $ be the $n \times n$ diagonal matrix whose $i$th diagonal element is $u_i'$.
	Draw $\tau'$ from the conditional distribution of $T$ given $(K,U,Y) = (k, u', y)$, which is the inverse Gamma distribution with shape parameter $n/2$ and scale parameter
	\[
	\frac{y^{\top} Q y - y^{\top} Q W(k) [W(k)^{\top} Q W(k) + \sigma^{-2} I_{p+k}]^{-1} W(k)^{\top} Q y }{2}.
	\]
	\item 
	Draw $(\beta'^{\top}, \alpha'^{\top})^{\top}$ from the conditional distribution of $(B^{\top}, A^{\top})^{\top}$ given $(K,T,U,Y) = (k,\tau', u', y)$, which is the normal distribution
	\[
	\mbox{N}_{p+k} \left( [W(k)^{\top} Q W(k) + \sigma^{-2} I_{p+k}]^{-1} W(k)^{\top} Q y, \, \tau [W(k)^{\top} Q W(k) + \sigma^{-2} I_{p+k}]^{-1} \right).
	\]
\end{enumerate}
Note that this is a Gibbs sampler with two components since the second and third steps are blocked, and can be viewed as drawing from the conditional distribution of $(A,B,T)$ given the rest.
{ The resultant chain has $\Phi_k$ as its stationary distribution, but is not reversible with respect to $\Phi_k$.}

Consider now a reversible jump algorithm for sampling from~$\Pi$.
When the current state is $(k,\alpha, \beta, \tau, u)$, the algorithm randomly performs one of three move types, code-named U, B, and D.
To be specific, the probability of choosing a move type depends on the current state only through~$k$, and these probabilities are denoted by $q_{\scriptsize\mbox{U}}(k)$, $q_{\scriptsize\mbox{B}}(k)$, and $q_{\scriptsize\mbox{D}}(k)$, respectively.
The three move types are defined as follows:
\begin{itemize}
	\item {\it U move}: 
	Draw $(\alpha',\beta',\tau',u')$ using one iteration of the two-component Gibbs sampler.
	Set the new state to $(k,\alpha',\beta',\tau',u')$.
	
	\item {\it B move}: 
	Draw $a_*$ from some distribution on $\mathbb{R}$ associated with a density function $g(\cdot \mid k, \alpha, \beta, \tau, u)$.
	Let $\alpha' = (\alpha^{\top}, a_*)^{\top}$.
	With probability
	\[
	\min \left\{ 1, \frac{  \pi(k+1, \alpha', \beta, \tau, u \mid y) \, q_{\scriptsize\mbox{D}}(k+1) }{  \pi(k, \alpha, \beta, \tau, u \mid y) \, q_{\scriptsize\mbox{B}}(k) \, g(a_* \mid k, \alpha, \beta, \tau, u) } \right\},
	\]
	set the new state to $(k+1,\alpha',\beta,\tau,u)$;
	otherwise, keep the old state.
	This move is only available when $k < k_{\scriptsize \mbox{max}}$.
	\item {\it D move}: 
	Let $\alpha' = (\alpha_1, \dots, \alpha_{k-1})^{\top}$ if $\alpha = (\alpha_1, \dots, \alpha_k)^{\top}$.
	With probability
	\[
	\min \left\{ 1, \frac{  \pi(k - 1, \alpha', \beta, \tau, u \mid y) \, q_{\scriptsize\mbox{B}}(k-1) \, g(\alpha_k \mid k - 1, \alpha', \beta, \tau, u, y) }{  \pi(k, \alpha, \beta, \tau, u \mid y) \, q_{\scriptsize\mbox{D}}(k) } \right\},
	\]
	set the new state to $(k-1,\alpha', \beta, \tau, u)$;
	otherwise, keep the old state.
	This move is only available when $k > 0$.
\end{itemize}

{ The resultant trans-dimensional Markov chain has $\Pi$ as its stationary distribution, but it is not reversible.}

\section{Convergence analysis}

For $k \in \mathsf{K}$, let $P_k$ be the Mtk of the Gibbs chain targeting $\Phi_k$.
Then the following result holds.

\begin{lemma} \label{lem:robustgeo}
	Suppose that \ref{P1} holds.
	For $k \in \mathsf{K}$, $P_k$ is $\Pi$-a.e. geometrically ergodic.
\end{lemma}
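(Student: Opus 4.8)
The plan is to identify $P_k$ as the data-augmentation (two-block Gibbs) sampler for a Bayesian linear regression with Laplace errors, and then to establish a geometric drift-and-minorization for its marginal chain. First I would observe that, because the initial sequence $(y_i)_{i \le 0}$ and the responses are observed, each lag vector $w_{i,k}$ is a fixed, known vector; hence, writing $\theta = (\beta^\top, \alpha^\top)^\top \in \mathbb{R}^{p+k}$ and stacking the rows $(x_i^\top, w_{i,k}^\top)$ into $W(k)$, within model $k$ the model is simply $y = W(k)\theta + \sqrt{\tau}\,E$ with iid Laplace $E_i$, a conditionally $N(0, \sigma^2 \tau I)$ prior on $\theta$, and the improper prior $f_T(\tau) \propto \tau^{-1}$. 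The U-move draws the latent scales $u$ as independent generalized inverse Gaussian variables, while the blocked $(A,B,T)$-step draws $\tau$ from an inverse Gamma and then $\theta$ from a Gaussian; this is precisely the sampler of \cite{liu1996bayesian} and \cite{choi2013analysis}.

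Since $u'$ is drawn from its full conditional given $(\theta,\tau)$ alone and the subsequent $(\theta',\tau')$ depends on the past only through $u'$, the coordinate $u$ is forgotten in a single sweep, so by standard solidarity arguments for two-block Gibbs samplers the $\Phi_k$-a.e. geometric ergodicity of the joint chain is equivalent to that of the marginal chain on $(\theta,\tau)$ obtained by integrating out $u$; the latter is reversible with respect to the marginal posterior of $(\theta,\tau)$. I would therefore establish, for this marginal chain, a geometric drift condition of the form $\mathbb{E}[V(\theta',\tau') \mid (\theta,\tau)] \le \gamma\, V(\theta,\tau) + b$ with $\gamma < 1$, together with a minorization on the sublevel sets of $V$, and conclude geometric ergodicity through a Rosenthal/Meyn--Tweedie-type theorem. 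A natural drift function is built from the weighted residuals $r_i = y_i - (W(k)\theta)_i$ and the dispersion, e.g. a functional of $\tau^{-1/2}\sum_i |r_i|$ together with $\tau$ and $\tau^{-1}$, reflecting that both the inverse-Gamma scale for the $\tau$-update and the Gaussian center for the $\theta$-update depend on the data only through such weighted residual quantities. Minorization on sublevel sets follows from a uniform positive density lower bound for one Gibbs sweep over compact sets, and $\varphi$-irreducibility is immediate since the Gaussian and inverse-Gamma full conditionals have full support.

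Assumption \ref{P1} is exactly what powers this argument. Full column rank of $W(k)$ makes $(W(k)^\top Q W(k) + \sigma^{-2} I)$ positive definite with a well-controlled inverse, while the requirement that $y$ lie outside the column space of $W(k)$ forces $\inf_\theta \|y - W(k)\theta\|^2 > 0$, i.e. there is no perfect fit. The no-perfect-fit condition is what keeps the inverse-Gamma scale parameter for $\tau$ bounded away from $0$, preventing $\tau$ from drifting toward $0$, and it is also what guarantees that $\Pi$ --- and hence each $\Phi_k$ --- is proper under the improper $\tau^{-1}$ prior. The step I expect to be the main obstacle is the drift computation for the latent-scale block: each $u_i$ has a generalized inverse Gaussian conditional whose moments $\mathbb{E}[u_i \mid \cdot]$ and $\mathbb{E}[u_i^{-1} \mid \cdot]$ are explicit but nonlinear functions of $|r_i|/\sqrt{\tau}$, and one must obtain uniform bounds on these moments and track how they propagate through the $\tau$- and $\theta$-updates to verify the contraction. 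This is where the detailed analysis of \cite{choi2013analysis} must be invoked or reproduced, with the added verification that the present $\sigma^2\tau$ scaling of the coefficient prior (rather than a fixed prior variance) does not disrupt the drift.
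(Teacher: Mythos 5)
Your plan is, in skeleton, exactly the paper's proof: the paper also reduces to the marginal chain on $(\alpha,\beta,\tau)$ --- arguing that the joint chain and the marginal chain are co-de-initializing in the sense of \cite{roberts2001markov}, which is the solidarity fact you invoke --- and then establishes geometric ergodicity via drift and minorization in the style of \cite{rosenthal1995minorization}, using \ref{P1} precisely as you describe (full column rank plus no perfect fit keeps the weighted residual quadratic form $y^{\top}Qy - y^{\top}QW(k)[W(k)^{\top}QW(k)+\sigma^{-2}I_{p+k}]^{-1}W(k)^{\top}Qy$ strictly positive), and with the inverse-Gaussian moment computation playing the role you anticipated; the argument follows \cite{roy2010monte} rather than reproducing \cite{choi2013analysis}, but that is the family of analysis you named.

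One step of your plan would fail as written, and it is worth naming. The paper's drift function is simply $V(\alpha,\beta,\tau)=\tau^{-1}\sum_{i=1}^N (y_i - x_i^{\top}\beta - w_{i,k}^{\top}\alpha)^2$, and its sublevel sets $\{V\le d\}$ are \emph{not} compact: $\tau$ can be arbitrarily large on them, and $\|(\alpha,\beta)\|$ can grow with $\tau$. So your claim that minorization ``follows from a uniform positive density lower bound for one Gibbs sweep over compact sets'' does not suffice when paired with this natural drift. The paper instead minorizes directly on the unbounded set $\{V\le d\}$, exploiting the sweep structure: on that set each residual satisfies $(y_i-x_i^{\top}\beta-w_{i,k}^{\top}\alpha)^2/\tau\le d$, which gives a pointwise lower bound $f_{U\mid\cdot}(u\mid\cdot)\ge \prod_i (8\bm{\pi}u_i^3)^{-1/2}\exp(-du_i/2 - 1/(8u_i))$ on the latent-scale conditional, and since the subsequent $(\tau',\alpha',\beta')$ draw depends on the past only through $u'$, this lower bound minorizes the entire one-step kernel --- no compactness in $(\alpha,\beta,\tau)$ is needed. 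Your hedge of enlarging the drift to include $\tau$ and $\tau^{-1}$ (which would restore compact sublevel sets) is itself risky: the inverse-Gaussian conditional has $E[u_i'\mid\cdot]=\sqrt{\tau}/(2|r_i|)$, which blows up as a residual vanishes, so moments of $\tau'$ are hard to control; the paper's $V$ needs only $E[u_i'^{-1}\mid\cdot]=2|r_i|/\sqrt{\tau}+4$, giving a bound of the form $\hat{P}_k V \le C_1\sqrt{V}+C_2$ and hence geometric drift. With the minorization repaired along these lines, your proposal coincides with the paper's argument.
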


The proof of this result is given in Section~\ref{app:robustgeo}.
It utilizes the classical drift and minorization technique, and follows closely arguments in \cite{roy2010monte}.
\cite{roy2010monte} studied the Gibbs chain when an improper prior on $(A,B)$ is used.
The result can also be proved using the approach of \cite{choi2013analysis}, who studied the spectral properties of a Markov operator closely related to $P_k$.

Combining Lemma~\ref{lem:robustgeo} and Theorem~\ref{thm:main} yields the following for the reversible jump algorithm.

\begin{proposition} \label{pro:ar}
	Suppose that \ref{P1} holds.
	Suppose further that $q_{\scriptsize\mbox{U}}(k) > 0$ for $k \in \mathsf{K}$, $q_{\scriptsize\mbox{B}}(k) > 0$ for $k < k_{\scriptsize \mbox{max}}$, and $q_{\scriptsize\mbox{D}}(k) > 0$ for $k > 0$.
	Then the reversible jump chain is $L^2(\Pi)$ geometrically convergent and $\Pi$-a.e. geometrically ergodic.
\end{proposition}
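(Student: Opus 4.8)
The plan is to derive Proposition~\ref{pro:ar} as a direct application of Theorem~\ref{thm:main}. Thus the entire task reduces to checking that the reversible jump kernel~$P$, together with the within-model Gibbs kernels $P_k$, satisfies hypotheses \ref{H1} and \ref{H2}, and then translating the resulting $L^2(\Pi)$ geometric convergence into $\Pi$-a.e. geometric ergodicity via Lemma~\ref{lem:equivalence}.

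First I would verify \ref{H1}. Part (i), the stationarity $\Phi_k P_k = \Phi_k$, is immediate since $P_k$ is by construction the Gibbs sampler targeting $\Phi_k$. For part (iii), note that from state $(k,z)$ the algorithm performs the U~move with probability $q_{\scriptsize\mbox{U}}(k)$, and conditional on a U~move the chain evolves exactly according to $P_k$; since the B~ and D~moves contribute only nonnegative mass to $P((k,z),\{k\}\times A)$, we obtain $P((k,z),\{k\}\times A) \ge q_{\scriptsize\mbox{U}}(k) P_k(z,A)$, so (iii) holds with $c_k = q_{\scriptsize\mbox{U}}(k) > 0$. Part (ii) is where I would invoke the structural results: the space $\Z_k$ is Euclidean so $\B_k$ is countably generated, the Gibbs chain is $\varphi$-irreducible, and $P_k$ is a two-component deterministic-scan Gibbs chain that is $\Phi_k$-a.e. geometrically ergodic by Lemma~\ref{lem:robustgeo}. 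Case (ii) of Lemma~\ref{lem:Pk-convergence} then yields $\|P_k\|_{\Phi_k} < 1$, i.e.\ (ii) holds with $t = 1$.

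Next I would verify \ref{H2}. The matrix $\Gamma_P$ encodes the reachability structure of the induced model-indicator dynamics on $\mathcal{K} = \{0,1,\dots,k_{\scriptsize \mbox{max}}\}$. The U~move keeps the model index fixed, giving $\gamma_{k,k} = 1$ for every $k$; the B~move, available when $k < k_{\scriptsize \mbox{max}}$ and selected with probability $q_{\scriptsize\mbox{B}}(k) > 0$, moves $k \mapsto k+1$; and the D~move, available when $k > 0$ and selected with probability $q_{\scriptsize\mbox{D}}(k) > 0$, moves $k \mapsto k-1$. Provided these birth and death proposals are accepted on a set of positive $\Psi_k$-measure, one has $\gamma_{k,k+1} = \gamma_{k+1,k} = 1$, so $\Gamma_P$ has the adjacency-plus-self-loop structure of a birth--death walk on the path $\{0,\dots,k_{\scriptsize \mbox{max}}\}$. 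Such a walk is irreducible, and the self-loops make it aperiodic, so every entry of $\Gamma_P^s$ is positive for~$s$ large enough (equivalently, one may invoke Lemma~\ref{lem:H2}); this is \ref{H2}.

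The main obstacle is the positivity of the birth and death acceptance probabilities needed for $\gamma_{k,k+1} = \gamma_{k+1,k} = 1$. Concretely, I must show $\int_{\Z_k} \Psi_k(\df z)\, P((k,z), \{k+1\}\times\Z_{k+1}) > 0$ and its death-move counterpart, which amounts to showing that the Metropolis--Hastings ratio in the B~ and D~moves is strictly positive on a set of positive measure. This should follow because the likelihood $f(y\mid\cdot)$ is everywhere strictly positive and the priors have full support, so the augmented posterior density $\pi$ is positive wherever it is evaluated; hence the acceptance ratio is positive precisely on the support of the proposal density $g$, which carries positive mass. With \ref{H1} and \ref{H2} established, Theorem~\ref{thm:main} gives $\|P^t\|_{\Pi} < 1$ and $L^2(\Pi)$ geometric convergence. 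Finally, since $\B$ is countably generated ($\mathcal{K}$ is finite and each $\Z_k$ is Euclidean) and the full chain is $\varphi$-irreducible, Lemma~\ref{lem:equivalence}(i) upgrades this to $\Pi$-a.e. geometric ergodicity.
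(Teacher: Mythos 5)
Your proposal is correct and takes essentially the same route as the paper: verify \ref{H1} with $c_k = q_{\scriptsize\mbox{U}}(k)$ and with condition (ii) supplied by Lemma~\ref{lem:robustgeo} plus case (ii) of Lemma~\ref{lem:Pk-convergence}, verify \ref{H2} from the birth--death-with-self-loops structure of $\Gamma_P$ (the paper takes $s = k_{\scriptsize \mbox{max}}$), and conclude via Theorem~\ref{thm:main} and Lemma~\ref{lem:equivalence}. The only divergence is that you explicitly justify the positivity of the birth/death acceptance probabilities on a set of positive $\Psi_k$-measure (positivity of $\pi$ everywhere plus positivity of the proposal density $g$ on a set of positive measure), a point the paper's proof treats as evident; this is a correct and harmless strengthening rather than a different approach.
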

\begin{proof}
	Apply Theorem~\ref{thm:main}.
	Let~$P$ be the Mtk of the reversible jump chain.
	For $k \in \mathsf{K}$, (i) and (iii) in \ref{H1} hold with $P_k$ defined above and $c_k = q_{\scriptsize\mbox{U}}(k)$.
	The chain associated with $P_k$ is clearly $\varphi$-irreducible.
	By Lemma~\ref{lem:robustgeo} and (ii) in Lemma~\ref{lem:Pk-convergence}, (ii) in \ref{H1} holds with $t_0 = 1$.
	
	To verify \ref{H2}, simply note that $\bar{P}(k,\{k'\})$, the average probability flow from model~$k$ to model~$k'$, is strictly positive whenever $|k'-k| \leq 1$, indicating the between-model movements are irreducible.
	(Alternatively, note that $P$ is $\Pi$-irreducible.)

	
	Thus, $P$ is $L^2(\Pi)$ geometrically convergent.
	By Theorem~1 of \cite{roberts2001geometric}, it is $\Pi$-a.e. geometrically ergodic.
\end{proof}

\section{A simulated example} \label{sssec:robustsimulate}

We use a simple simulated example to show how geometric ergodicity allows us to take estimation uncertainty into account when performing model selection.
A data set satisfying condition \ref{P1} with $k_{\scriptsize \mbox{max}} = 10$, $p = 50$, $n = 100$ is simulated.
The data set is generated according to the autoregressive model with the true model order~$K$ being~4, and the true value of~$A$ being $(0.3, 0.25, 0.05, 0.05)^{\top}$.

The reversible jump algorithm is applied to the simulated data set.
The prior distribution $f_K$ is a truncated Poisson distribution.
The density $g(\cdot \mid k, \alpha, \beta, \tau, u)$ is taken to be the conditional density function of $A_{k+1}$ given $K=k+1$, $(A_1, \dots, A_k)^{\top} = \alpha$, $B = \beta$, $T = \tau$, $U = u$, and $Y = y$.
This corresponds to a normal distribution.
Following Section 4.3 of \cite{green1995reversible}, probabilities of birth and death proposals are taken to be
\[
q_{\scriptsize\mbox{B}}(k) = \frac{1}{3} \min \left\{ 1, \frac{f_K(k+1)}{f_K(k)} \right\}, \, q_{\scriptsize\mbox{D}}(k) = \frac{1}{3} \min \left\{ 1, \frac{f_K(k-1)}{f_K(k)} \right\}.
\]
By Proposition \ref{pro:ar}, the corresponding chain is $\Pi$-a.e. geometrically ergodic.


\begin{figure}
	\centering
	\includegraphics[width=0.7\textwidth]{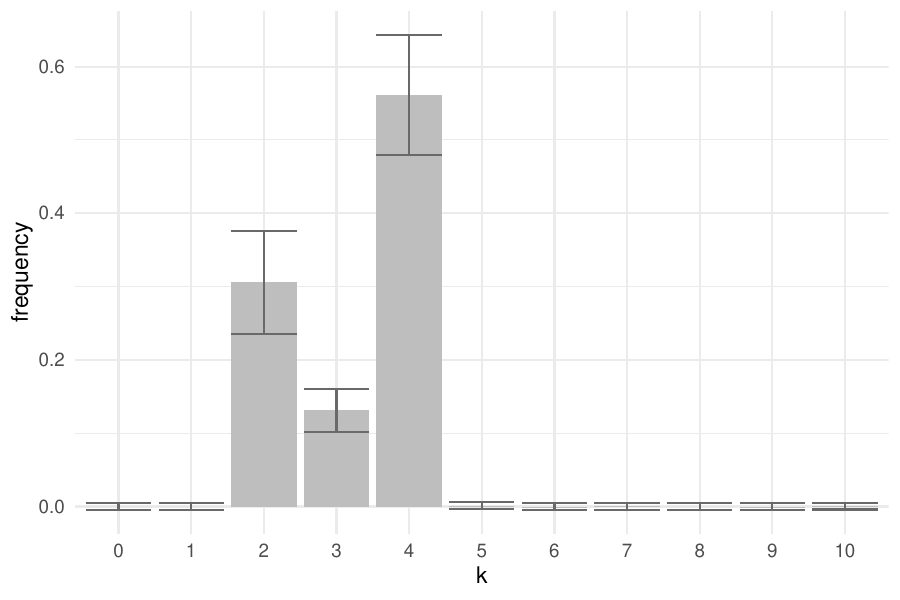}
	\caption{Estimated posterior distribution of $K$ with $95\%$ simultaneous confidence intervals.} \label{fig:varmcmc-2}
\end{figure}


Based on a simulated chain of length $m = 4 \times 10^5$, we estimate the posterior probability of $K = k$ for $k = 0,\dots,10$.
We construct simultaneous 95\% Wald confidence intervals for the posterior probabilities using Bonferroni correction.
Like in Section \ref{sssec:binary-example}, we add $(\log m) \sqrt{1/b_m^2+b_m/m}$ to the estimated asymptotic variances, where $b_m \approx m^{0.6}$ is the batch size.
The posterior probability estimates and confidence intervals are visualized in Figure \ref{fig:varmcmc-2}.
We may thus conclude with high confidence that $K=4$ is the most favored model, while $K=2$ is in second place.


\section{Proof of Lemma~\ref{lem:robustgeo}} \label{app:robustgeo}

Fix $k \in \mathcal{K}$.
Suppose that $(A(t), B(t), T(t), U(t))_{t=0}^{\infty}$ is a chain associated with $P_k$.
Then $(A(t), B(t), T(t))_{t=0}^{\infty}$ is also a Markov chain that has the same convergence rate in the total variation distance.
This is because the two chains are co-de-initializing \citep{roberts2001markov}.
Thus, it suffices to show that the latter is $\hat{\Phi}_k$-a.e. geometrically ergodic, where $\hat{\Phi}_k$ is the distribution of $(A,B,T)$ given $(K,Y) = (k,y)$.

To show geometric ergodicity one can establish a set of drift and minorization conditions \citep{rosenthal1995minorization}.
Let $\hat{P}_k$ be the Mtk of $(A(t), B(t), T(t))_{t=0}^{\infty}$.
For $(\alpha, \beta, \tau) \in \mathbb{R}^k \times \mathbb{R}^p \times (0,\infty)$, let
\[
V(\alpha, \beta, \tau) = \frac{\sum_{i=1}^N (y_i - x_i^{\top} \beta - w_{i,k}^{\top} \alpha)^2}{\tau}.
\]
The following two lemmas establish a set of drift and minorization conditions for $\hat{P}_k$.
Similar arguments can be found in, e.g., \cite{roy2010monte} and \cite{hobert2015convergence}.

\begin{lemma} \label{lem:robustdrift}
	Assume that \ref{P1} holds.
	There exist $\lambda < 1$ and $L < \infty$ such that the following holds for each $(\alpha, \beta, \tau) \in \mathbb{R}^k \times \mathbb{R}^p \times (0,\infty)$:
	\[
	\hat{P}_k V(\alpha, \beta, \tau) \leq \lambda V(\alpha, \beta, \tau) + L.
	\]
\end{lemma}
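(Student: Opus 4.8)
The plan is to compute $\hat{P}_k V$ exactly by working through the three Gibbs steps in reverse order, reduce the result to a bound involving only the reciprocals of the latent variables drawn in Step 1, and then close the recursion using the moments of those inverse Gaussian draws together with Young's inequality. First I would pass to matrix notation: write $\theta = (\beta^\top,\alpha^\top)^\top \in \mathbb{R}^{p+k}$ and $W = W(k)$, so that $V(\alpha,\beta,\tau) = \|y - W\theta\|^2/\tau$ and the current residual is $r = y - W\theta$, with $r_i = y_i - x_i^\top\beta - w_{i,k}^\top\alpha$. Given the latent vector $u'$ from Step 1, set $Q = \mathrm{diag}(u_1',\dots,u_N')$, $M = W^\top Q W + \sigma^{-2} I_{p+k}$, $\hat\theta = M^{-1} W^\top Q y$, and $R(u') = y^\top Q y - y^\top Q W M^{-1} W^\top Q y$ (twice the inverse-Gamma scale of Step 2). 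Conditioning on $(u',\tau')$, the new parameter is $\theta' \sim N(\hat\theta,\tau' M^{-1})$, so $E[\|y - W\theta'\|^2 \mid u',\tau'] = \|y - W\hat\theta\|^2 + \tau'\,\mathrm{tr}(W^\top W M^{-1})$; dividing by $\tau'$ removes the $\tau'$ from the second term, and averaging over $\tau'$ with $E[1/\tau'\mid u'] = N/R(u')$ (shape $N/2$ over scale $R(u')/2$ for the inverse Gamma) gives
\[
E[V \mid u'] = \frac{N\,\|y - W\hat\theta\|^2}{R(u')} + \mathrm{tr}(W^\top W M^{-1}).
\]

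The trace term is harmless: since $M \succeq \sigma^{-2} I$ we have $M^{-1} \preceq \sigma^2 I$, so $\mathrm{tr}(W^\top W M^{-1}) \le \sigma^2\,\mathrm{tr}(W^\top W)$, a finite constant independent of $u'$. The crux is the first term, a ratio in which small $u_i'$ could in principle make $R(u')$ tiny. The key observation that resolves this is a coordinatewise bound: because $R(u') \ge \sum_j u_j' (y - W\hat\theta)_j^2 \ge u_i'(y - W\hat\theta)_i^2$, each summand satisfies $(y - W\hat\theta)_i^2 / R(u') \le 1/u_i'$, whence $\|y - W\hat\theta\|^2 / R(u') \le \sum_{i=1}^N 1/u_i'$. (Assumption \ref{P1} guarantees $y\notin\mathrm{col}(W)$, hence $y\ne W\hat\theta$ and $R(u')>0$, so the division is legitimate.) Therefore $E[V\mid u'] \le N\sum_{i=1}^N 1/u_i' + \sigma^2\,\mathrm{tr}(W^\top W)$.

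It remains to average over $u'$, which is drawn in Step 1 from the product of inverse Gaussians determined by the current state. Identifying each factor as the inverse Gaussian with mean $\sqrt{\tau}/(2|r_i|)$ and shape $1/4$, its reciprocal moment is $E[1/U_i'] = 2|r_i|/\sqrt{\tau} + 4$, so
\[
\hat{P}_k V \le 2N\sum_{i=1}^N \frac{|r_i|}{\sqrt{\tau}} + 4N^2 + \sigma^2\,\mathrm{tr}(W^\top W).
\]
Finally, Young's inequality $|r_i|/\sqrt{\tau} \le (\epsilon/2)(r_i^2/\tau) + 1/(2\epsilon)$ turns the sum into $N\epsilon\,V + N^2/\epsilon$; choosing $\epsilon$ with $\lambda := N\epsilon < 1$ (e.g.\ $\epsilon = 1/(2N)$, giving $\lambda = 1/2$) yields the claimed drift with $L = N^2/\epsilon + 4N^2 + \sigma^2\,\mathrm{tr}(W^\top W)$. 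I expect the main obstacle to be exactly the control of the ratio $\|y - W\hat\theta\|^2/R(u')$, since naive bounds blow up as the latent weights shrink; once the coordinatewise inequality above is established, the remaining steps—bounding the trace, inserting the inverse Gaussian reciprocal moments, and applying Young's inequality—are routine.
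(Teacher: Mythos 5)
Your proof is correct and follows essentially the same route as the paper's: the same exact computation of $E[V \mid u']$ through the three Gibbs steps yielding $N\|y - W\hat\theta\|^2/R(u') + \mathrm{tr}(W^\top W M^{-1})$, the same key inequality $R(u') \ge \sum_i u_i'(y-W\hat\theta)_i^2$ (you apply it coordinatewise, the paper via $\min_i u_i'$, both landing on $\sum_i E[1/U_i'] = \sum_i (2|r_i|/\sqrt{\tau} + 4)$), and a routine absorption of the resulting sublinear term (your Young's inequality versus the paper's Cauchy--Schwarz). Your handling of the trace term via $M \succeq \sigma^{-2} I$, giving the deterministic bound $\sigma^2\,\mathrm{tr}(W^\top W)$, is marginally cleaner than the paper's bound $(p+k)\max_i U_i'^{-1}$, but this is a cosmetic difference, not a different approach.
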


\begin{proof}
	Fix $(\alpha, \beta, \tau) \in \mathbb{R}^k \times \mathbb{R}^p \times (0,\infty)$.
	Let $(A',B',T') \sim \hat{P}_k((\alpha, \beta, \tau), \cdot)$.
	Recalling the steps of the two-component Gibbs algorithm, one can obtain
	\[
	\begin{aligned}
		\hat{P}_k V(\alpha, \beta, \tau) =& E \left[ V(A',B',T') \right] \\
		=& E\left\{ \frac{\|y - W(k) [W(k)^{\top} Q W(k) + \sigma^{-2} I_{p+k} ]^{-1} W(k)^{\top} Q y \|^2 }{T'} \right\} + \\
		& E \left( \mbox{tr} \left\{ W(k) [W(k)^{\top} Q W(k) + \sigma^{-2} I_{p+k} ]^{-1} W(k)^{\top} \right\} \right) \\
		=& E\left\{ \frac{N \| y -  W(k) [W(k)^{\top} Q W(k) + \sigma^{-2} I_{p+k} ]^{-1} W(k)^{\top} Q y \|^2 }{ y^{\top} Q y - y^{\top} Q W(k) [W(k)^{\top} Q W(k) + \sigma^{-2} I_{p+k}]^{-1} W(k)^{\top} Q y } \right\} + \\
		& E \left( \mbox{tr} \left\{ W(k) [W(k)^{\top} Q W(k) + \sigma^{-2} I_{p+k} ]^{-1} W(k)^{\top} \right\} \right),
	\end{aligned}
	\]
	where $W(k)$ is the $N \times (p+k)$ matrix whose $i$th row is $(x_i^{\top}, w_{i,k}^{\top})$, and $Q$ is the $N \times N$ diagonal matrix whose diagonal elements $(U_1',\dots, U_N')$ are distributed according to the density 
	\[
	\begin{aligned}
		&f_{U \mid \cdot}(u' \mid k, \alpha, \beta, \tau, y) \\
		=& \prod_{i=1}^N \left\{ \frac{1}{\sqrt{8\bm{\pi} u_i'^3}} \exp \left[ -  \frac{u_i' (y_i - x_i^{\top} \beta - w_{i,k}^{\top} \alpha )^2}{2\tau} + \frac{|y_i - x_i^{\top} \beta - w_{i,k}^{\top} \alpha|}{2\sqrt{\tau}}  - \frac{1}{8u_i'} \right]  \, \ind_{(0,\infty)}(u_i') \right\}.
	\end{aligned}
	\]
	Under \ref{P1}, $Q^{1/2} y$ is not in the column space of $Q^{1/2} W(k)$.
	This implies that
	\[
	\begin{aligned}
		& y^{\top} Q y - y^{\top} Q W(k) [W(k)^{\top} Q W(k) + \sigma^{-2} I_{p+k}]^{-1} W(k)^{\top} Q y \\
		\geq & y^{\top} Q y - y^{\top} Q W(k) [W(k)^{\top} Q W(k) ]^{-1} W(k)^{\top} Q y \\
		=& \left\| \left\{ I_N -  Q^{1/2} W(k) [W(k)^{\top} Q W(k)]^{-1} W(k)^{\top} Q^{1/2} \right\} Q^{1/2} y \right\|^2 \\
		>& 0.
	\end{aligned}
	\]
	Moreover,
	\[
	\begin{aligned}
		&y^{\top} Q y - y^{\top} Q W(k) [W(k)^{\top} Q W(k) + \sigma^{-2} I_{p+k}]^{-1} W(k)^{\top} Q y \\
		\geq & \left\|Q^{1/2} \left\{ y -  W(k) [W(k)^{\top} Q W(k) + \sigma^{-2} I_{p+k} ]^{-1} W(k)^{\top} Q y \right\} \right\|^2 \\
		\geq & \left(\min_i U_i' \right) \| y -  W(k) [W(k)^{\top} Q W(k) + \sigma^{-2} I_{p+k} ]^{-1} W(k)^{\top} Q y \|^2,
	\end{aligned}
	\]
	and
	\[
	\begin{aligned}
		&\mbox{tr} \left\{ W(k) [W(k)^{\top} Q W(k) + \sigma^{-2} I_{p+k} ]^{-1} W(k)^{\top} \right\} \\
		\leq & \mbox{tr} \left\{  W(k) [W(k)^{\top} Q W(k) ]^{-1} W(k)^{\top}  \right\} \\
		\leq & \left( \max_i U_i^{-1} \right) \mbox{tr} \left\{   W(k) [W(k)^{\top} W(k) ]^{-1} W(k)^{\top}  \right\} \\
		=& (p+k) \left( \max_i U_i'^{-1} \right).
	\end{aligned}
	\]
	It follows that
	\[
	\hat{P}_k V(\alpha, \beta, \tau) \leq (N+p+k) E \left( \max_i U_i'^{-1} \right) \leq (N+p+k) \sum_{i=1}^N E(U_i'^{-1}).
	\]
	Using properties of the inverse Gaussian distribution and the Cauchy-Schwarz inequality, one obtains
	\[
	\begin{aligned}
		\hat{P}_k V(\alpha, \beta, \tau) &\leq  (N+p+k) \sum_{i=1}^N \left( \frac{2 |y_i - x_i^{\top} \beta - w_{i,k}^{\top} \alpha|}{\sqrt{\tau}} + 4 \right) \\
		&\leq 2 (N+p+k) \sqrt{N} \sqrt{V(\alpha,\beta,\tau)} + 4(N+p+k).
	\end{aligned}
	\]
	The desired result follows immediately.
\end{proof}

\begin{lemma} \label{lem:robustminor}
	For each $d > 0$, there exists a nonzero measure $\nu(\cdot)$
	\[
	\hat{P}_k((\alpha, \beta, \tau), \cdot) \geq \nu(\cdot)
	\]
	whenever $V(\alpha, \beta, \tau) \leq d$.
\end{lemma}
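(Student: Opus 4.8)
The plan is to exploit the conditional-independence structure of the two-component Gibbs sampler, which makes the dependence of $\hat{P}_k$ on the current state enter through a single, easily bounded factor. Writing one step of $\hat{P}_k$ as first drawing the latent vector $u' = (u_1',\dots,u_N')$ from the inverse Gaussian product $f_{U\mid\cdot}(\cdot \mid k,\alpha,\beta,\tau,y)$, and then drawing $(\tau',\alpha',\beta')$ from a conditional law $g(\cdot \mid u',y)$ governed by Steps~2 and~3 (which involve the previous state only through~$u'$), the transition density of $\hat{P}_k$ from $(\alpha,\beta,\tau)$ to $(\alpha',\beta',\tau')$ is
\[
\int_{(0,\infty)^N} f_{U\mid\cdot}(u' \mid k,\alpha,\beta,\tau,y)\, g(\tau',\alpha',\beta' \mid u',y)\, \df u'.
\]
Since the starting state $(\alpha,\beta,\tau)$ appears only in $f_{U\mid\cdot}$, it suffices to bound this factor from below by a function of~$u'$ alone, uniformly over the sublevel set $\{V \le d\}$.

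To that end, I would rewrite the exponent of the $i$th factor of $f_{U\mid\cdot}$ in terms of $r_i := (y_i - x_i^{\top}\beta - w_{i,k}^{\top}\alpha)/\sqrt{\tau}$, so that the factor becomes $(8\bm{\pi}u_i'^3)^{-1/2}\exp[-u_i' r_i^2/2 + |r_i|/2 - 1/(8u_i')]$ and $V(\alpha,\beta,\tau) = \sum_{i=1}^N r_i^2$. On $\{V \le d\}$ one has $r_i^2 \le d$ for every~$i$; dropping the nonnegative term $|r_i|/2$ from the exponent and using $-u_i' r_i^2/2 \ge -u_i' d/2$ then produces the state-free lower bound
\[
f_{U\mid\cdot}(u' \mid k,\alpha,\beta,\tau,y) \ge \prod_{i=1}^N \frac{1}{\sqrt{8\bm{\pi} u_i'^3}}\exp\left(-\frac{u_i' d}{2} - \frac{1}{8u_i'}\right) =: h_d(u'),
\]
valid whenever $V(\alpha,\beta,\tau)\le d$.

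Finally, I would set $\nu(\cdot) = \int_{(0,\infty)^N} h_d(u')\, g(\cdot \mid u',y)\, \df u'$. Because $h_d$ is independent of the starting state and $\int_{(0,\infty)^N} h_d(u')\,\df u' \in (0,\infty)$ (each factor is, up to a constant, the kernel of an inverse Gaussian density and hence integrable and positive for $d>0$), the measure~$\nu$ is nonzero and free of $(\alpha,\beta,\tau)$, and the preceding lower bound on the transition density yields $\hat{P}_k((\alpha,\beta,\tau),\cdot)\ge \nu(\cdot)$ on $\{V\le d\}$. The only genuinely delicate point is the uniform control of the cross term $-u_i' r_i^2/2$, which is precisely where the drift function~$V$ earns its keep: a bound on~$V$ caps each $r_i^2$, so the state-dependent part of the exponent cannot drive the latent density to zero.
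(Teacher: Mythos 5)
Your proof is correct and follows essentially the same route as the paper's: both factor the one-step density of $\hat{P}_k$ through the latent vector~$u$, use $V(\alpha,\beta,\tau) \leq d$ to bound each $(y_i - x_i^{\top}\beta - w_{i,k}^{\top}\alpha)^2/\tau$ by~$d$, drop the nonnegative term $|r_i|/2$ in the exponent to obtain the state-free lower bound $\prod_{i=1}^N (8\bm{\pi}u_i^3)^{-1/2}\exp(-du_i/2 - 1/(8u_i))$, and define~$\nu$ by integrating the conditional density of $(A,B,T)$ given~$u$ against this bound. Your additional remark that the bound integrates to a positive finite constant (being a product of inverse Gaussian kernels) is a useful explicit check that~$\nu$ is nonzero, which the paper leaves implicit.
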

\begin{proof}
	Denote the conditional density of $(A,B,T)$ given $(K,U,Y) = (k,u,y)$ by $f_{A,B,T \mid \cdot}(\cdot \mid k,u,y)$.
	Then the density of $\hat{P}_k((\alpha,\beta,\tau), \cdot)$ is
	\[
	\int_{(0,\infty)^N} f_{A,B,T \mid \cdot}(\cdot \mid k,y) \, f_{U \mid \cdot}(u \mid k, \alpha, \beta, \tau, u, y) \, \df u.
	\]
	Assume that $V(\alpha, \beta, \tau) \leq d$, so that $(y_i - x_i^{\top} \beta - w_{i,k}^{\top} \alpha)^2/\tau \leq d$ for $i = 1,\dots,N$.
	Then, for $u = (u_1, \dots, u_n) \in (0,\infty)^N$,
	\[
	f_{U \mid \cdot}(u \mid k, \alpha, \beta, \tau, y) \geq \prod_{i=1}^N \left[ \frac{1}{\sqrt{8\bm{\pi} u_i^3}} \exp \left( - \frac{du_i}{2} - \frac{1}{8 u_i}  \right) \right].
	\]
	The desired result is established by letting~$\nu$ be associated with the density
	\[
	\int_{(0,\infty)^N} f_{A,B,T \mid \cdot}(\cdot \mid k,u,y) \prod_{i=1}^N \left[ \frac{1}{\sqrt{8\bm{\pi} u_i^3}} \exp \left( - \frac{du_i}{2} - \frac{1}{8 u_i}  \right) \right] \, \df u.
	\]
\end{proof}

By Lemmas~\ref{lem:robustdrift} and~\ref{lem:robustminor} and the standard drift and minorization argument \citep{rosenthal1995minorization},~$\hat{P}_k$ is $\hat{\Phi}_k$-a.e. geometrically ergodic.
As previously mentioned, this implies that $P_k$ is $\Phi_k$-a.e. geometrically ergodic.

\end{document}